\newtheorem{theorem}{Theorem}
\newtheorem{lemma}{Lemma}
\newtheorem{definition}{Definition}
\newtheorem{proposition}{Proposition}
\newtheorem{remark}{Remark}
\DeclareMathOperator{\Rainflow}{Rainflow}
\definecolor{bleudefrance}{rgb}{0.19, 0.55, 0.91}
\definecolor{ao(english)}{rgb}{0.0, 0.5, 0.0}
\newcommand{\addcite}[0]{\ifthenelse{\boolean{showcomments}}
{\textcolor{purple}{(add cite(s)) }}{}}%
\newcommand{\you}[1]{  \ifthenelse{\boolean{showcomments}}
{\todo[inline,color=bleudefrance]{PCY: #1}}{}}
\newcommand{\youmargin}[1]{\ifthenelse{\boolean{showcomments}}{\marginpar{\color{bleudefrance}\tiny PCY: #1}}{}}
\newcommand{\youumargin}[1]{  \ifthenelse{\boolean{showcomments}}
{\todo[color=bleudefrance,size=\tiny]{PCY: #1}} {}}
\newcommand{\pcyou}[1]{\ifthenelse{\boolean{showcomments}}
	{ \textcolor{red}{(PCY:  #1)}}{}}
\newcommand{\ayou}[1]{
\ifthenelse{\boolean{showedits}}
{\added[id=PCY]{#1}}
{#1}
}
\newcommand{\chyou}[2]{
\ifthenelse{\boolean{showedits}}
{\replaced[id=PCY]{#1}{#2}}
{#1}
}
\newcommand{\dyou}[1]{
\ifthenelse{\boolean{showedits}}
{\deleted[id=PCY]{#1}}
{}
}
\newcommand{\enrique}[1]{  \ifthenelse{\boolean{showcomments}}
{\todo[inline,color=bleudefrance]{Enrique: #1}}{}}
\newcommand{\emmargin}[1]{\ifthenelse{\boolean{showcomments}}{\marginpar{\color{bleudefrance}\tiny EM: #1}}{}}
\newcommand{\aem}[1]{
\ifthenelse{\boolean{showedits}}
{\added[id=EM]{#1}}
{\!#1\hspace{-4.75pt}}
}
\newcommand{\repem}[2]{
\ifthenelse{\boolean{showedits}}
{\replaced[id=EM]{#1}{#2}}
{\!#1\hspace{-4.75pt}}
}
\newcommand{\dem}[1]{
\ifthenelse{\boolean{showedits}}
{\deleted[id=EM]{#1}}
{}
}
\let\old@ps@headings\ps@headings
\let\old@ps@IEEEtitlepagestyle\ps@IEEEtitlepagestyle
\def\psccfooter#1{%
    \def\ps@headings{%
        \old@ps@headings%
        \def\@oddfoot{\strut\hfill#1\hfill\strut}%
        \def\@evenfoot{\strut\hfill#1\hfill\strut}%
    }%
    \def\ps@IEEEtitlepagestyle{%
        \old@ps@IEEEtitlepagestyle%
        \def\@oddfoot{\strut\hfill#1\hfill\strut}%
        \def\@evenfoot{\strut\hfill#1\hfill\strut}%
    }%
    \ps@headings%
}
        \parbox{\textwidth}{\hrulefill \\ \small{22nd Power Systems Computation Conference} \hfill \begin{minipage}{0.2\textwidth}\centering \vspace*{4pt} \includegraphics[scale=0.06]{}\\\small{PSCC 2022} \end{minipage} \hfill \small{Porto, Portugal --- June 27 -- July 1, 2022}}%
\begin{document}

\setboolean{arxiv}{true}  

%
% paper title
% Titles are generally capitalized except for words such as a, an, and, as,
% at, but, by, for, in, nor, of, on, or, the, to and up, which are usually
% not capitalized unless they are the first or last word of the title.
% Linebreaks \\ can be used within to get better formatting as desired.
% Do not put math or special symbols in the title.
% \title{Market Mechanism for Truthful Energy Storage Bidding}
\title{A Market Mechanism for Truthful Bidding with Energy Storage}

%% To specify the authors when (number of affiliations <= 2)
% \author{
% \IEEEauthorblockN{Rajni Kant Bansal$^{*}$, Pengcheng You$^{\dagger}$, Dennice F. Gayme$^{*}$, and Enrique Mallada$^{\dagger}$}
% \IEEEauthorblockA{$^{*}$ Department of Mechanical Engineering\\ Johns Hopkins University, Baltimore, MD, US\\
% \{rbansal3, dennice\}@jhu.edu
% \\
% $^{\dagger}$ Department of Electrical and Computer Engineering\\ Johns Hopkins University, Baltimore, MD, US\\
% \{pcyou, mallada\}@jhu.edu}
% }

\author{
\IEEEauthorblockN{Rajni Kant Bansal, Pengcheng You, Dennice F. Gayme, and Enrique Mallada}
\IEEEauthorblockA{Whiting School of Engineering, Johns Hopkins University, Baltimore, MD, US\\
\{rbansal3, pcyou, dennice, mallada\}@jhu.edu}

\thanks{This work was supported by NSF through grants ECCS 1711188, CAREER ECCS 1752362, and CPS ECCS 2136324.}
%, andJohns Hopkins University Discovery Award
}

%% To specify the authors when (number of affiliations > 2)
% \author{\IEEEauthorblockN{Author n.1\IEEEauthorrefmark{1},
% Author n.2\IEEEauthorrefmark{2},
% Author n.3\IEEEauthorrefmark{3}, 
% Author n.4\IEEEauthorrefmark{3} and
% Author n.5\IEEEauthorrefmark{4}}
% \IEEEauthorblockA{\IEEEauthorrefmark{1} Department Name of Organization A\\
% Name of the organization A,
% Address A\\ Emails if wanted}
% \IEEEauthorblockA{\IEEEauthorrefmark{2} Department Name of Organization B\\
% Name of the organization B,
% Address B\\ Emails if wanted}
% \IEEEauthorblockA{\IEEEauthorrefmark{3} Department Name of Organization C\\
% Name of the organization C,
% Address C\\ Emails if wanted}
% \IEEEauthorblockA{\IEEEauthorrefmark{4}Department Name of Organization D\\
% Name of the organization D,
% Address D\\ Emails if wanted}
% }

% make the title area
\maketitle
% \enrique{The use of truthful is a little misleading. Usually, truthful is used in the context of strategic participants}

% As a general rule, do not put math, special symbols or citations
% in the abstract
\begin{abstract}

%v6
This paper proposes a market mechanism for multi-interval electricity markets with generator and storage participants. Drawing ideas from supply function bidding, we introduce a novel bid structure for storage participation that allows storage units to communicate their cost to the market using energy-cycling functions that map prices to cycle depths. The resulting market-clearing process--implemented via convex programming--yields corresponding schedules and payments based on traditional energy prices for power supply and per-cycle prices for storage utilization. We illustrate the benefits of our solution by comparing the competitive equilibrium of the resulting mechanism to that of an alternative solution that uses prosumer-based bids. Our solution shows several advantages over the prosumer-based approach. It does not require a priori price estimation. It also incentivizes participants to reveal their truthful cost, thus leading to an efficient, competitive equilibrium. Numerical experiments using New York Independent System Operator (NYISO) data validate our findings.

\end{abstract}

\begin{IEEEkeywords}
Electricity markets, energy storage, Rainflow algorithm 
\end{IEEEkeywords}

\section{Introduction}

Energy storage systems like lithium-ion batteries have the technical capability to provide essential grid services for system reliability and power quality. These capabilities combined with the growing adoption of non-dispatchable renewable energy sources are driving growing participation of energy storage in grid operation and electricity markets~\cite{aeo2021,no2222,denholm2019potential}.
%{\cite{aeo2021,no2222,denholm2019potential,vazquez2010energy,caiso2019paper,storage2018trend}.} 
A number of market dispatch models utilizing storage have been proposed for the purposes of e.g., integrating renewable energy sources~\cite{solar_integr,wind_integ}, supporting transmission and distribution networks~\cite{trans_distr_supp,freq_reserve_req}, providing demand response~\cite{demand_resp}. However, most of these models assume the canonical market mechanism for economic dispatch, which was designed without accounting for the operational cost of storage, that does not depend on energy supply, but rather charging-discharging cycles.

Recent works have sought to account for storage usage cost in the grid dispatch in two ways. 
The first approach seeks to develop sequences of charge-discharge bids or control actions, using existing market and reserve interfaces so as to maximize the storage operator revenue (market payments minus storage operation cost)~\cite{he2015optimal,XuTPSfactoring,shi2017optimal,lin_degrd_flex_bid}.
\textcolor{black}{The resulting optimization strategies rely on unknown prices that must be estimated~\cite{he2015optimal,XuTPSfactoring,shi2017optimal}, or accounted in the worst case \cite{lin_degrd_flex_bid}}.
The second approach incorporates the cost of storage operation by explicitly introducing a usage cost based on either energy cycles in the dispatch problem~\cite{GHe_marginal,bansal2020storage} or other proxies for storage degradation~\cite{linear_cost_coeff_bid}. These strategies usually assume storage owners to be truthful in revealing their  cost and do not endow them with the flexibility of seeking profit maximization. Moreover, these works provide little insight into how storage owner incentives can affect the ability of a system operator to efficiently operate the grid.

This paper provides insight into this problem through a novel market mechanism design that captures the effect of storage and generator owner incentives. We compare this approach to existing strategies through analysis of the overall system efficiency via competitive equilibrium characterization. More specifically, we consider a multi-interval market model where generators are endowed with a quadratic cost and bid using a supply function, while storage owners quantify the storage usage cost based on the degradation induced by the energy cycles. %Though a closed form expression is difficult to achieve, one can 
Our formulation exploits previous work combining the Rainflow cycle counting algorithm with a cycle stress function to obtain a \emph{notably convex} cycling cost function~\cite{shi2017optimal,bansal2020storage}.

We consider two different bidding strategies for storage. In the first setting, storage bids as a prosumer using a generalized supply function~\cite{yue_prosumer}, that allows it to behave as supply and demand, and is compensated based on spot prices. Although such a market achieves a competitive equilibrium, it requires that storage owners have a priori knowledge of cleared prices, and leads to prices and dispatch schedules that do not minimize the social cost.  
%This inability to minimize social costs motivates the search for a different approach. 
% We thus propose a novel energy cycling based mechanism that in fact successfully aligns the market equilibrium with the social optimum.
In order to overcome this inability to minimize social costs, we propose a new mechanism where storage owners bid using an energy-cycling function. This function maps prices (in dollars per cycle depth) to the corresponding cycle depth that the user is willing to perform, and allows storage participants to be compensated based on a per-cycle basis. We show that by properly adapting the market-clearing to account for this bid, the competitive equilibrium of this mechanism exists, and leads to a dispatch that minimizes the overall social cost. These goals are achieved by inducing a truthful bidding among storage owners that is independent of the clearing prices.
Numerical simulations show the advantages of the proposed cycle based market mechanism by evaluating the social cost and storage profit of the two mechanisms. We also include a setting in which storage cost is fully disregarded in the market-clearing as a baseline.

The rest of the paper is organized as follows. In Section~\ref{sec_2} we introduce the social planner problem and storage cost model. In Section~\ref{sec_3} we characterize the equilibrium in the prosumer based market where all participants participate using (generalized) linear supply function bids and compare it with the social planner optimal solution. The energy-cycling based function for storage %that aligns the proposed cycle aware market equilibrium with the planner problem 
is discussed in Section~\ref{sec_4}. We provide the numerical illustrations and conclusions in Section~\ref{sec_5} and Section~\ref{sec_6}, respectively.

\section{Social Planner Problem}\label{sec_2}

In this section we formulate a social planner problem that aims to achieve the optimal economic dispatch by minimizing the total cost of dispatching both generators and storage units.

\subsection{Problem Formulation}

Consider a multi-interval horizon $ \{1,2,...,T\}$ where a set $\mathcal{G}$ of generators and a set $\mathcal{S}$ of storage units participate in a market to meet a given inelastic demand profile $d\in\mathbb{R}^T$. 
%We assume participants submit their supply function bids which market operator uses to calculate dispatch quantities and clearing prices. To this end we define the market participants and the associated economic dispatch problem below.
For each generator $j \in \mathcal{G}$, the power output over the time horizon is denoted by a vector $g_j \in  \mathbb{R}^T$ whose elements are each subject to capacity constraints 
\begin{equation}\label{gen_limit_ineq}
    \underline{g}_j \le g_{j,t} \le \overline{g}_j, \quad t\in \{1,2,\hdots,T\},
\end{equation}
where $\underline{g}_j, \overline{g}_j$ denote the minimum and maximum generation limits, respectively. Analogously, for each storage unit $i \in \mathcal{S}$ of capacity $E_i$, the discharge (positive) or charge (negative) rates over the time horizon is denoted by a vector $u_i \in \mathbb{R}^T$. 
We assume each charge or discharge rate $u_{i,t}$ is bounded as
\begin{equation}
    \underline{u}_i \leq u_{i,t} \leq \overline{u}_i, \quad t\in \{1,2,\hdots,T\}. \label{SoC_control_limit}
\end{equation}
where $\underline{u}_i, \overline{u}_i$ denote the minimum and maximum rate limits, respectively. The corresponding amount of energy stored is characterized by a normalized State of Charge (SoC) profile $x_i \in \mathbb{R}^{T+1}$, with the initial SoC $x_{i,0} = x_{i,o}$. The SoC evolves over the time horizon according to
\begin{equation}
    x_{i,t} = x_{i,t-1} - \frac{1}{E_i}u_{i,t}, \ t \in \{1,2,...,T\}, \label{SoC_evolution_individual}.
\end{equation}
This evolution over the time horizon can be rewritten compactly as
\begin{equation}
    Ax_i = -\frac{1}{E_i}u_i ,\label{SoC_evolution}
\end{equation}
where
\begin{equation}
    A =  \begin{bmatrix}
-1 & 1 & 0 &\hdots &0\\
 0 &-1 & 1 &\ddots &\vdots\\
\vdots & \ddots &\ddots & \ddots & 0\\
0 & \hdots & 0 & -1 & 1\\
\end{bmatrix}\in \mathbb{R}^{T \times ({T+1})} .\nonumber 
\end{equation} 
% is an upper triangular matrix. 
In order to account for the cyclic nature of storage, we impose periodic constraints on the SoC, i.e.,
\begin{equation}
    x_{i,0} =  x_{i,T} = x_o. \label{SoC_terminal_const}
\end{equation}
Substituting~\eqref{SoC_evolution_individual} into~\eqref{SoC_terminal_const} leads to
\begin{equation}
    \mathbf{1}^Tu_{i} = 0.  \label{periodicity_const}
\end{equation} 
The normalized SoC satisfies
\begin{equation}
    0\leq x_{i,t} \leq 1 ,\quad t\in \{0,1,\hdots,T\} \label{Soc_limit_ineq},
\end{equation}
which can be rewritten using equation~\eqref{SoC_evolution_individual} and~\eqref{SoC_terminal_const} as
\begin{equation}
    (x_o - 1)\mathbf{1} \leq \Tilde{A}u_{i} \leq x_o\mathbf{1},  \label{Soc_limit_rate_ineq}
\end{equation}
where
\begin{equation}
    \Tilde{A} =  \frac{1}{E}\begin{bmatrix}
1 & 0  &\hdots &0\\
 1 &1  &\ddots &\vdots\\
\vdots  &\ddots & \ddots & 0\\
1 & \hdots  & 1 & 1\\
\end{bmatrix}\in \mathbb{R}^{T \times {T}}, \nonumber 
\end{equation} 
is a lower triangular matrix.
Finally, the social planner problem %\sout{that minimizes the total cost of the generators and the storage to meet the demand over the time horizon} 
is given by 
%underlying truthful economic dispatch problem where the market operator minimizes the total actual cost of the generators and storage units over the time horizon to meet the demand:

\noindent \emph{SOCIAL PLANNER}
\begin{subequations}
\label{planner_problem}%
\begin{eqnarray}
    \min_{g_j,j\in\mathcal{G},u_i,i\in\mathcal{S}} &\!\!\!\sum_{i\in \mathcal{S}}C_i(u_i) +  \sum_{j\in \mathcal{G}}\left(\frac{c_j}{2} g_j^Tg_j + a_j \mathbf{1}^Tg_j \right)
    \label{planner_obj} \\
    \text{s.t.} 
    & d = \sum_{j\in\mathcal{G}}g_j +\sum_{i\in\mathcal{S}} u_i \label{power_bal_const} \\
    & \eqref{gen_limit_ineq},\eqref{SoC_control_limit}, \eqref{periodicity_const}, \eqref{Soc_limit_rate_ineq}, \nonumber
\end{eqnarray}
\end{subequations}
where \eqref{power_bal_const} enforces power balance for all time intervals. %\footnote{To highlight the role of market mechanism designs in reflecting participants' economic incentives, we ignore other realistic physical constraints, e.g., transmission, capacity, and ramping, to focus on power balancing. The study of their impacts will be left as future work.}
We use $C_i(u_i)$ to represent the operational cost of storage unit $i$, to be defined in the next subsection, and assume quadratic cost functions for the generators. 
% parameterized by the cost coefficients $c_j$ and $a_j$, $j\in\mathcal{G}$.
For ease of analysis we assume without loss of generality that the linear coefficient $a_j = 0$. %represents the true cost coefficient of generator $j$ in the market.
%and the equation \eqref{power_bal_const} enforces the power balance over the time horizon. 

\subsection{Storage Cost Model}

The intrinsic degradation incurred due to repeated charging and discharging half-cycles\footnote{A full cycle is defined to consist of a charging half-cycle and a discharging half-cycle of the same depth.} constitutes the main operational cost of storage. We adopt the Rainflow cycle counting based method~\cite{lee2011rainflow,rainflow1971fatigue} to enumerate the cycles, which we incorporate into a cycle based cost function \cite{shi2017optimal,XuTPSfactoring,bansal2020storage}.
For each storage unit $i\in\mathcal{S}$, the Rainflow cycle counting algorithm maps the SoC profile $x_i$ to the associated charging-discharging half-cycles, summarized in a vector of half-cycle depths $\nu_i\in\mathbb{R}^T$, i.e.,
\[
    \nu_i := \Rainflow(x_i) .
\]
Using the cycle depth vector $\nu_i$ one can quantify the capacity degradation using a cycle stress function $\Phi(\cdot):[0,1]^T \mapsto [0,1]$.
In general, $\Phi(\cdot)$ is well approximated by a quadratic function~\cite{shi2017optimal}, thus we consider here
\[
    \Phi(\nu_i) := \frac{\rho}{2}\nu_i^T\nu_i ,
\]
where $\rho$ is a given constant coefficient~\cite{shi2017optimal,bansal2020storage}. This vector of identified half-cycle depths $\nu_i$ is used to compute the total degradation cost of storage $i$ as
\begin{equation}
      \frac{b_i}{2}\nu_i^T\nu_i\ =  \frac{\rho BE_i}{2}\nu_i^T\nu_i ,
    \nonumber
\end{equation}
where $B$ is the unit capital cost per kilowatt-hour of storage capacity $E_i$, and $b_i:= \rho B E_i $ is a constant. In order to define a storage cost function in terms of the storage charge-discharge rate vector $u_i$, we define a piece-wise linear mapping from this rate vector $u_i$ to the corresponding half-cycle depth vector $\nu_i$, as described in the following proposition.
\begin{proposition}
The total degradation cost $C_{i}(u_i)$ in \eqref{planner_obj} is given by 
\begin{equation}
    C_i(u_i) = \frac{b_i}{2}u_i^TN(u_i)^TN(u_i)u_i.
    \label{degradation_cost_rate}
\end{equation} 
The matrix $N(u_i)$ is defined as 
\begin{equation}
    N(u_i):= -\frac{1}{E_i}M(x_i)^TA^{\dagger} ,
\end{equation}
where the matrix $M(x_i)\in \mathbb{R}^{(T+1)\times T}$ is the incidence matrix for the SoC profile $x_i$~\cite{bansal2020storage} and satisfies
\begin{equation}
    \nu_i = \Rainflow(x_i) = N(u_i)u_i = M(x_i)^Tx_i.
    \label{depth_relation}
\end{equation}
%and $b :=\rho BE$ represents the true cost coefficient. 
\end{proposition}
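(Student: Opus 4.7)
The plan is to unroll the chain of linear (piecewise‑linear) maps that take the rate vector $u_i$ to the half‑cycle depth vector $\nu_i$, and then just substitute into the quadratic stress functional.

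First, I would establish the intermediate identity $\nu_i = M(x_i)^T x_i$ asserted in \eqref{depth_relation}. This is the combinatorial content of the Rainflow algorithm. Given an SoC trajectory $x_i \in \mathbb{R}^{T+1}$, the algorithm identifies an ordered list of local extrema and pairs them into charging and discharging half‑cycles; each resulting half‑cycle depth is the absolute difference of two entries of $x_i$ corresponding to a peak and a trough. Encoding each such pairing as a $\pm 1$ column (with $+1$ at the peak index and $-1$ at the trough, or vice‑versa) produces exactly the signed incidence matrix $M(x_i)\in\mathbb{R}^{(T+1)\times T}$ introduced in~\cite{bansal2020storage}, so that the $k$-th entry of $M(x_i)^T x_i$ is the signed depth of the $k$-th half‑cycle. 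Because $\Phi$ depends only on the squared depths, the signs are irrelevant to the cost, and we can take $\nu_i = M(x_i)^T x_i$. The dependence of $M$ on $x_i$ captures the piecewise‑linear nature of the map.

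Next I would express $x_i$ as a linear function of $u_i$. The SoC dynamics \eqref{SoC_evolution} read $A x_i = -\tfrac{1}{E_i} u_i$, and together with the periodicity condition $x_{i,0}=x_{i,T}=x_o$ encoded by \eqref{periodicity_const} this system is consistent (recall $\mathbf{1}^T u_i=0$). Applying the Moore–Penrose pseudo‑inverse yields the particular solution
\begin{equation}
    x_i \;=\; -\frac{1}{E_i}\, A^{\dagger} u_i, \nonumber
\end{equation}
which, when substituted into $\nu_i = M(x_i)^T x_i$, gives
\begin{equation}
    \nu_i \;=\; M(x_i)^T\!\left(-\tfrac{1}{E_i} A^{\dagger} u_i\right) \;=\; \underbrace{\Bigl(-\tfrac{1}{E_i}\, M(x_i)^T A^{\dagger}\Bigr)}_{=:\,N(u_i)} u_i, \nonumber
\end{equation}
which is exactly \eqref{depth_relation}, and in particular defines $N(u_i)$ as in the statement (with the $x_i$‑dependence of $M$ translating into a $u_i$‑dependence of $N$).

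Finally, substituting $\nu_i = N(u_i) u_i$ into the quadratic degradation cost $\tfrac{b_i}{2}\nu_i^T\nu_i$ immediately yields \eqref{degradation_cost_rate}. The main obstacle, and the step that requires the most care, is the justification of $\nu_i = M(x_i)^T x_i$: one must argue that the Rainflow pairing of extrema produces a well‑defined incidence structure even when $x_i$ has plateaus or non‑generic extrema, and that the resulting $M(x_i)$ matches the construction referenced in~\cite{bansal2020storage}. The remaining pseudo‑inverse substitution and the algebraic step producing the quadratic form are routine.
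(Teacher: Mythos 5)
There is a genuine gap in the step where you pass from $Ax_i = -\tfrac{1}{E_i}u_i$ to ``the particular solution'' $x_i = -\tfrac{1}{E_i}A^{\dagger}u_i$. Since $A\in\mathbb{R}^{T\times(T+1)}$ has a one-dimensional null space (spanned by $\mathbf{1}$), the general solution is $x_i = -\tfrac{1}{E_i}A^{\dagger}u_i + (I-A^{\dagger}A)w$, and the \emph{actual} SoC profile --- the one pinned down by the initial condition $x_{i,0}=x_o$ and the one to which the Rainflow algorithm and the matrix $M(x_i)$ refer --- is in general \emph{not} the minimum-norm solution $-\tfrac{1}{E_i}A^{\dagger}u_i$; it differs from it by a constant shift along $\mathbf{1}$. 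So the substitution $\nu_i = M(x_i)^T\bigl(-\tfrac{1}{E_i}A^{\dagger}u_i\bigr)$ does not follow from what you wrote: you are evaluating $M(x_i)^T$ on a vector that is not $x_i$.

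The missing idea --- which is exactly the nontrivial content of the paper's proof --- is that the homogeneous component is annihilated: $M(x_i)^T(I-A^{\dagger}A) = \mathbf{0}$, because each column of the incidence matrix $M(x_i)$ is a (signed) difference of two node indicators, hence a linear combination of the rows of $A$, and $A(I-A^{\dagger}A)=\mathbf{0}$ (equivalently, $M(x_i)^T\mathbf{1}=0$, so a constant shift of the SoC profile does not change any cycle depth). With this observation, $M(x_i)^Tx_i = -\tfrac{1}{E_i}M(x_i)^TA^{\dagger}u_i = N(u_i)u_i$ for the true $x_i$, and the rest of your argument (quoting $\nu_i = M(x_i)^Tx_i$ from~\cite{bansal2020storage} and substituting into $\tfrac{b_i}{2}\nu_i^T\nu_i$) goes through exactly as in the paper. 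Add that annihilation step and your proof is complete and essentially identical in route to the paper's.
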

\ifthenelse{\boolean{arxiv}}{
\begin{proof}
We can explicitly write the SoC profile $x_i$ from~\eqref{SoC_evolution} in terms of $u_i$ as:
\begin{equation}
    x_i = -\frac{1}{E_i}A^{\dagger}u_i + (I - A^{\dagger}A)w
\end{equation}
for any arbitrary vector $w \in \mathbb{R}^{T+1}$~\cite{james_1978}. Further from~\cite{bansal2020storage} we have the cost of degradation of storage as function of SoC profile $x_i$ explicitly as
\begin{equation}
    C_i(x_i) = \frac{\rho BE_i}{2}x_i^TM(x_i)M(x_i)^Tx_i 
    \label{degradation_cost_SoC}
\end{equation}
given a quadratic cycle stress function $\Phi$. Here the matrix $M(x_i)\in \mathbb{R}^{(T+1)\times T}$ is the incidence matrix for the SoC profile $x_i$ such that $M(x_i)^Tx_i = \nu_i$.

Given the matrix $A$ in~\eqref{SoC_evolution} and any incidence matrix $M(x_i)$, the following holds
\[
    M(x_i)^T(I-A^{\dagger}A) = \mathbf{0}_{T \times (T+1)} 
\]
To see this, notice that the columns of the incidence matrix $M(x_i)$ is given by a linear combination of the rows of the matrix $A$ and $A(I-A^{\dagger}A) = \mathbf{0}_{T \times (T+1)}$. In other words the matrix $M(x_i)^T$ is in the null space of matrix $I-A^{\dagger}A$. Therefore we have
\[
    M(x_i)^Tx_i = -\frac{1}{E_i}M(x_i)^TA^{\dagger}u = N(u_i)u_i
\]
such that $N(u_i):= -\frac{1}{E_i}M(x_i)^TA^{\dagger}$ and the total degradation cost is given by
\[
    C_i(u_i) = \frac{\rho BE_i}{2}u_i^TN(u_i)^TN(u_i)u_i = \frac{b_i}{2}u_i^TN(u_i)^TN(u_i)u_i.
\]
\end{proof}
}{The proof is provided in~\cite{bansal2021market}.}
For the example SoC profile in Fig.~\ref{fig:N_example}, the depth vector from the Rainflow cycle counting algorithm is $\nu=[x_1-x_2, x_1-x_2, x_3-x_0,x_3-x_4]^T$ and we obtain the associated matrix $N(u)$ such that $N(u)u = \frac{1}{E}[u_2,u_2,-u_1-u_2-u_3,u_4]^T = \nu$.
%\begin{figure}[htp]
    %\vspace*{-.4cm}
%    \centering
%    \includegraphics[width=8.5cm]{}
%    \caption{An example of an SoC profile and associated $N(u)$ matrix.}%
%    \label{fig:rainflow_example}%
%\end{figure}%
\begin{figure}[htp]
    %\vspace*{-.4cm}
    \centering
    \includegraphics[width=8.5cm]{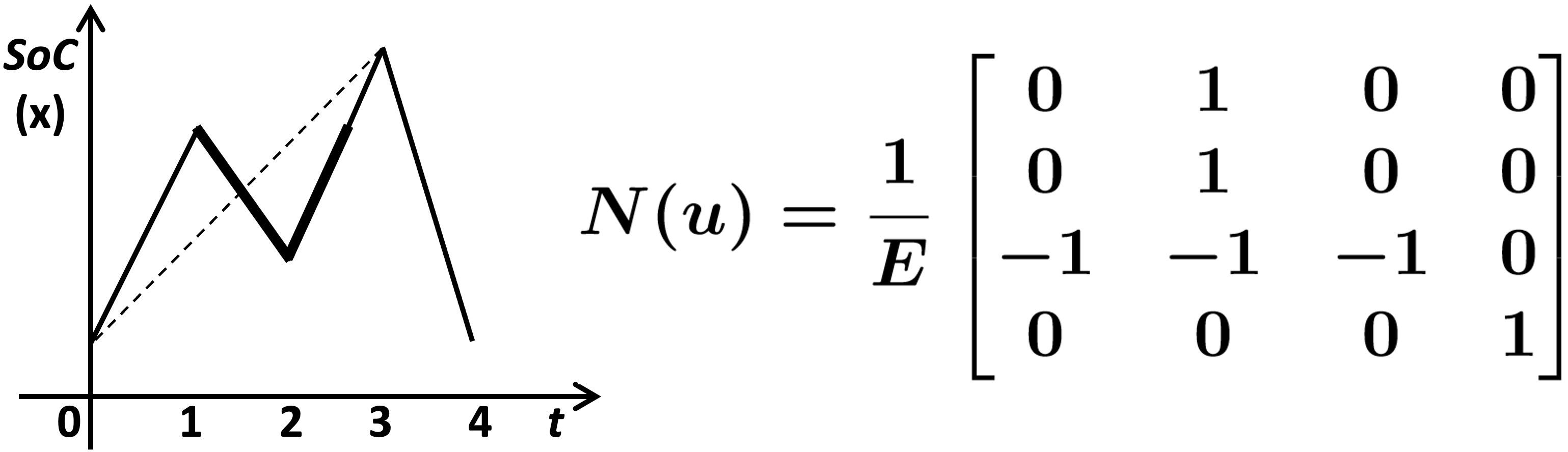}
    \caption{An example of SoC profile and associated $N(u)$ matrix.}%(the bold full cycle is not obvious, you did not connect the remaining half-cycle?)}%
    \label{fig:N_example}%
\end{figure}%
\begin{remark} \label{multiple_Nmatrix}
    %A SoC profile $x$ and the associated storage rate $u$ is called to be at the boundary if $\nexists \ \epsilon > 0$ such that $\forall \ \delta \in\mathbb{R}^T, \ ||\delta||\le \epsilon$, the relation $M(x+\delta) = M(x)$ holds. 
    The piece-wise linear and temporally coupled cost function $C_i(\cdot)$ is convex~\cite{shi2017optimal,bansal2020storage}. However, the cost function~\eqref{degradation_cost_rate} is not differentiable everywhere with respect to the storage rate $u$ due to its piece-wise linear structure. At the point of non-differentiability we define all $m$ possible associated matrices for a given $u$ as $N_k(u), k\in\{1,2,...,m\}$ and the following relation holds: 
    \[
        %M_i(x)^Tx = 
        N_k(u)u = N(u)u = \nu, \  \forall k \in \{1,2,...,m\}.
    \]
    See, e.g.,~\cite{bansal2020storage} for more details. %For ease of notation we drop the subscript $k$ in context of discussion on depth vector $\nu$ i.e. we define $N(u):=N_k(u)$ such that $N(u)u = N_k(u)u, \ \forall k\in\{1,2,...,m\}$.
\end{remark}

We illustrate this procedure for a non-differentiable profile in Fig.~\ref{fig:N_example_2} where $x_1=x_2$. Here the depth vector is $\nu=[x_1-x_2, x_1-x_2, x_3-x_0,x_3-x_4]^T$ and all the associated matrices $N_i(u) \ i\in\{1,2\}$ are given by $N_1(u)u = \frac{1}{E}[u_2,u_2,-u_1-u_2-u_3,u_4]^T = \nu$ and $N_2(u)u = \frac{1}{E}[0,0,-u_1-u_2-u_3,u_4]^T = \nu$. Here $N_1(u)$ and $N_2(u)$ represent the matrices associated with the SoC profile $x = [x_0,x_1\mp{\epsilon},x_2,x_3]$ for any $\epsilon \rightarrow 0^{+}$. 
\begin{figure}[htp]
    %\vspace*{-.4cm}
    \centering
    \includegraphics[width=8.5cm]{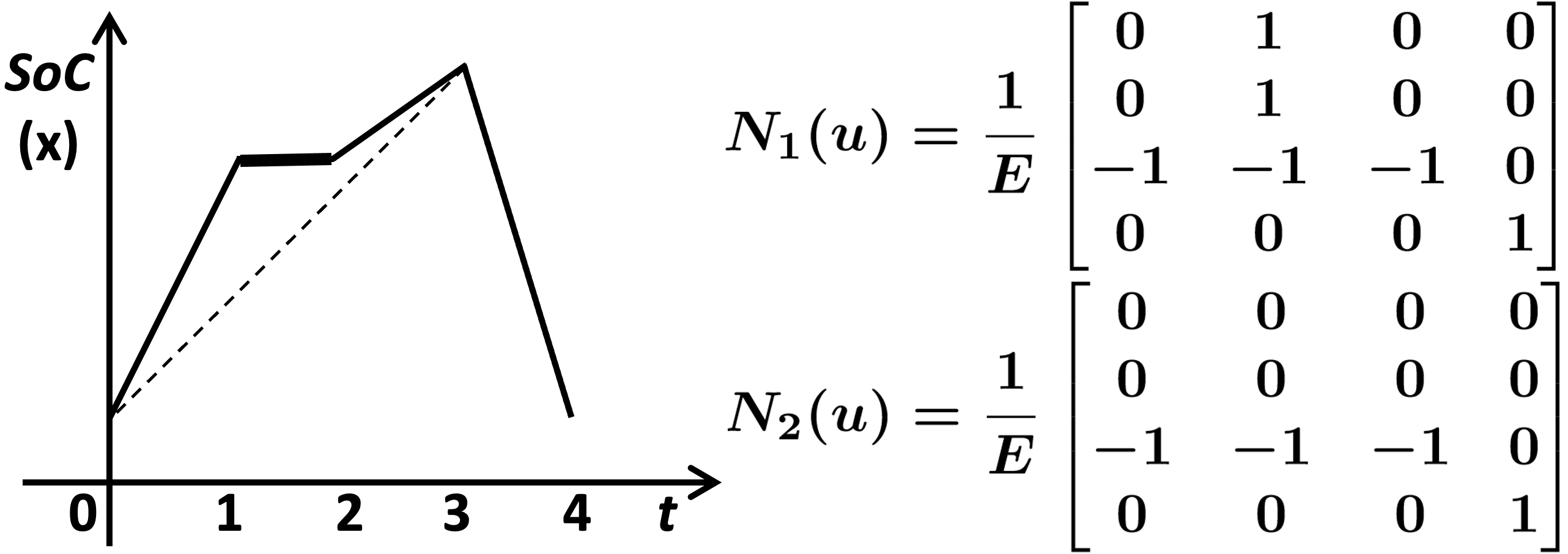}
    \caption{An example of an SoC profile and all the possible associated $N(u)$ matrices.}%
    \label{fig:N_example_2}%
\end{figure}%

\section{Prosumer Based Market Model} \label{sec_3}

We next exploit the analytical expression for the cost of storage degradation to evaluate whether the competitive equilibrium of participants bidding with the generalized linear supply function leads to an efficient system dispatch that minimizes the social cost. \textcolor{black}{%\sout{Since our goal is to illustrate that indeed, a prosumer based bidding approach does not lead to a competitive equilibrium that is efficient,} 
For ease of exposition we consider in this section the simplified setting where the only biding constraint in the market clearing is~\eqref{power_bal_const}. However, our results generalize beyond this assumption, at the cost of a more involved analysis.}
We next define the bidding form, the market-clearing, and the market settlement as part of the market mechanism.

%We next exploit the analytical expression for cost of storage degradation to evaluate whether the storage participation in the form of linear supply function bidding reflectmechanism as characterized below. First, we consider the linear supply based bidding behaviour of participants in the multi-interval market and then model the associated economic dispatch problem based on bids from the participants.  

\subsection{Market Mechanism}

We first formulate the market mechanism where all the participants submit a linear supply function. 
{Several approaches based on a linear supply function have been proposed to analyze participation of generators in the market~\cite{Joh2004Efficiency}. %OK to remove if needed.  
Here we extend this framework to  heterogeneous participants comprising both generators and storage.} We specify the bid for generator $j$ as
\begin{equation}
    g_j = \alpha_j \Theta_j \ 
    \label{generator_linear_supply}
\end{equation}  
and the bid for storage $i$ as
\begin{equation}
    u_i = \hat{\beta_i} \hat{\Theta}_i \ .
    \label{storage_linear_supply}
\end{equation} 
where $\Theta_j \in \mathbb{R}^T$ and $\hat{\Theta}_i\in \mathbb{R}^T$ denote the marginal prices aimed at incentivizing participation. These supply function bids are parameterized by $\alpha_j\ge 0$ and $\hat{\beta_i} \ge 0$, which indicate the willingness of generator $j$ and storage $i$ to produce at the price $\Theta_j$ and $\hat{\Theta}_i$, respectively. The market operator collects the supply function bids from all the participants and 
% interprets $\alpha_j$ and $\hat{\beta_i}$ as the willingness of the generator $j$ and the storage $i$ to produce $g_j$ and $u_i$ at the marginal price respectively. Thus, the operator 
associates a cost function with generator $j$ that is given by
\[
    \sum_{t=1}^T\int_{0}^{g_{j,t}} \Theta_{j,t} \partial g_{j,t} = \sum_{t=1}^T\int_{0}^{g_{j,t}} \frac{1}{\alpha_j} g_{j,t}\partial g_{j,t}  = \frac{1}{2\alpha_j}g_j^Tg_j
\]
as well as a cost function for storage unit $i$ that is given by
\[
    \sum_{t=1}^T\int_{0}^{u_{i,t}} \hat{\Theta}_{i,t} \partial u_{i,t} = \frac{1}{2\hat{\beta_i}}u_i^Tu_i.
\]
Given the bids $(\alpha_j,j\in\mathcal{G}, \hat{\beta}_i i\in\mathcal{S})$, the operator solves the economic dispatch problem that minimizes total generation and storage costs to meet inelastic demand $d \in  \mathbb{R}^T$:

\noindent \emph{SYSTEM:}
\begin{subequations}
\begin{align}
    \min_{u_i, i\in \mathcal{S}, g_j,j \in \mathcal{G}} & \ \sum_{i \in \mathcal{S}}\frac{1}{2\hat{\beta_i}}u_i^Tu_i+\sum_{j \in \mathcal{G}}\frac{1}{2\alpha_j}g_j^Tg_j \\
    \textrm{s.t.} \ \ \ \ \ & 
    \eqref{power_bal_const}
    % ,\eqref{gen_limit_ineq},\eqref{SoC_control_limit}, \eqref{periodicity_const}, \eqref{Soc_limit_rate_ineq}
\end{align}
\label{system_problem}
\end{subequations}
% \enrique{Are you missing other constraints?
% Ans: In this section we only consider the power balance constraint}
The optimal solution to the \emph{SYSTEM} gives the dispatch and the market-clearing prices for the participants at each time interval.
%is given by the market clearing prices or the dual variable associated with the constraint~\eqref{power_bal_const}.
More precisely, the generator $j$ and the storage $i$ produce the dispatch quantities $g_j$ and $u_i$ and are paid $\Theta_j^Tg_j$ and $\hat{\Theta}_i^Tu_i$ as part of the market settlement respectively. 

{The individual prices $\Theta_j$ and $\hat{\Theta}_i$ are functions of dual variables associated with operational constraints of generators and storage. In the simplified setting where only the power balance constraint is binding $\Theta_j = \lambda$ and $\hat{\Theta}_i = \lambda$.} 

In this paper, we consider the price-taking behavior of the generators and the storage with characteristics summarized below. A price-taking assumption is usually evaluated as a benchmark in the sense that if a market mechanism does not behave as desired under price-taking assumptions, it is unlikely to perform well otherwise.
\begin{definition}
A market participant is price-taking if it accepts the given market prices and cannot influence the prices in the market on its own. 
\end{definition}

The participants choose their bids to maximize their individual profit as defined below:

\noindent \emph{Generator Bidding Problem}
\begin{subequations}
\begin{align}
    \max_{g_j} \ \pi(\lambda,g_j) := & \max_{g_j} \ \lambda^Tg_j - \frac{c_j}{2}g_j^Tg_j \label{generator_profit_problem} \\
    = & \max_{\alpha_j \ge 0} \ \alpha_j\lambda^T\lambda - \alpha_j^2\frac{c_j}{2}\lambda^T\lambda \label{generator_profit_problem_parameter}
\end{align}
\end{subequations}
\noindent \emph{Storage Bidding Problem}
\begin{subequations}
\begin{align}
    \max_{u_i} \ & \pi(\lambda,u_i) :=  \max_{u_i} \ \lambda^Tu_i - \frac{b_i}{2}u_i^TN(u_i)^TN(u_i)u_i
    \label{storage_profit_problem} \\
    = & \max_{\hat{\beta_i} \ge 0} \ \hat{\beta_i}\lambda^T\lambda - \frac{b_i\hat{\beta_i}^2}{2}\lambda^TN(\lambda,\hat{\beta_i})^TN(\lambda,\hat{\beta_i})\lambda
    \label{storage_profit_problem_parameter}
\end{align}
\end{subequations}
where we have substituted the linear supply function bids~\eqref{generator_linear_supply},\eqref{storage_linear_supply} with $\Theta_j = \hat{\Theta}_i = \lambda$ respectively.

\subsection{Market Equilibrium}

We next define and characterize the competitive equilibrium 
% \sout{among the heterogeneous participants under price taking participation} 
under which none of the participants has any incentive to change its decision while the market is cleared. 
\begin{definition}
Under price-taking assumptions we say the bids $(\hat{\beta_i},i\in \mathcal{S}, \alpha_j,j \in \mathcal{G}, \lambda)$ form a competitive equilibrium if the following conditions are satisfied:
\begin{enumerate}
    \item For each generator $j \in \mathcal{G}$, the bid $\alpha_j$ maximizes their individual profit in the market
    \item For each storage element $i \in \mathcal{S}$, the bid $\hat{\beta_i}$ maximizes their individual profit in the market
    %\item For each storage element $i \in \mathcal{S}$, the Rainflow constraint is satisfied with associated dual prices $\theta$.
    \item The inelastic demand $d \in \mathbb{R}^T$ is satisfied with the market-clearing prices $\lambda$.
\end{enumerate}
\end{definition}
%Under the price taking assumptions, the set of participants $j \in \mathcal{G}$ and $i \in \mathcal{S}$ in market equilibrium is also said to be in competitive equilibrium. 
We first propose a lemma that will enable us 
%\sout{in computing the derivative of~\eqref{storage_profit_problem_parameter}} 
to characterize the competitive equilibrium in the market.
\begin{lemma}
For any $\beta\in\mathbb{R}, \ \beta > 0$ and $\lambda \in \mathbb{R}^T$, the following holds
\[
    N(\beta\lambda) = N(\lambda).
\]
\label{N_indep_beta}
\end{lemma}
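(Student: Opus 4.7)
The plan is to unfold the definition $N(u_i) = -\tfrac{1}{E_i} M(x_i)^T A^{\dagger}$ and observe that $A^{\dagger}$ and $E_i$ are fixed, so the only $u_i$-dependence lives in the incidence matrix $M(x_i)$. Thus the statement reduces to showing that $M(x_i(\beta\lambda)) = M(x_i(\lambda))$, i.e.\ that the Rainflow cycle structure is invariant under positive scaling of the rate vector.

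First I would use the SoC evolution \eqref{SoC_evolution} together with the initial condition $x_{i,0}=x_o$ to write the SoC profile explicitly as an affine function of the rate vector, $x_{i,t}(u) = x_o - \tfrac{1}{E_i}\sum_{s=1}^{t} u_{s}$. Replacing $u$ by $\beta u$ with $\beta>0$ therefore gives $x_{i,t}(\beta u) - x_o = \beta\bigl(x_{i,t}(u) - x_o\bigr)$, a uniform positive rescaling about the common initial level $x_o$. Consequently, the sequence of indices at which local maxima and minima occur, as well as the ordering of their pairwise differences, is preserved; only the magnitudes of these differences are multiplied by $\beta$.

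Next I would invoke the defining property of the Rainflow algorithm: its output partitions the extrema of $x_i$ into half-cycles (peak--valley pairs) purely on the basis of the relative ordering of successive extrema and the nesting of reversals. Since positive scaling preserves the signs of all increments $x_{i,t}-x_{i,t-1}$ and the ordering of all magnitudes $|x_{i,t}-x_{i,s}|$, the pairing produced by Rainflow on $x_i(\beta\lambda)$ coincides exactly with the pairing on $x_i(\lambda)$. The incidence matrix $M(x_i)$, whose entries are in $\{-1,0,1\}$ and are determined entirely by which indices are paired as the endpoints of each half-cycle, is therefore identical in the two cases: $M(x_i(\beta\lambda)) = M(x_i(\lambda))$. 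Substituting back into the definition of $N$ yields $N(\beta\lambda) = -\tfrac{1}{E_i} M(x_i(\beta\lambda))^T A^{\dagger} = -\tfrac{1}{E_i} M(x_i(\lambda))^T A^{\dagger} = N(\lambda)$.

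The main subtlety is the middle step: making precise that Rainflow's cycle pairings are scale-invariant. A clean way to handle this is to note that the algorithm is specified entirely in terms of comparisons between consecutive reversal magnitudes, and any strictly monotone transformation of the SoC deviations preserves the outcome of those comparisons, so positive scaling is a special case. At points where $x_i$ has a nondifferentiability of the type described in Remark~\ref{multiple_Nmatrix}, the same argument applies to each of the matrices $N_k(\cdot)$ associated with a given $u$, since the corresponding perturbed profiles $x \mp \epsilon$ scale in the same manner.
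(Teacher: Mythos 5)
Your proof is correct and follows essentially the same route as the paper, which only states in one sentence that the scalar $\beta>0$ merely rescales the input to the piece-wise linear map $N$ while leaving the charging--discharging structure unchanged. Your write-up simply makes that sketch precise by showing $x_{i,t}(\beta u)-x_o=\beta\bigl(x_{i,t}(u)-x_o\bigr)$ and that the Rainflow pairing, hence $M(x_i)$ and $N$, depends only on scale-invariant comparisons, including the tie cases of Remark~\ref{multiple_Nmatrix}.
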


The proof uses the fact that the scalar multiplier $\beta$ only scales the input profile to the piece-wise linear map $N$ but the profile behaviour, i.e., charging-discharging characteristics remain unchanged. The following proposition characterizes the competitive equilibrium. 
\begin{theorem} \label{Thrm1}
The competitive equilibrium of the prosumer based market mechanism~\eqref{system_problem} is uniquely determined by:
\begin{subequations}
\begin{align}
    &\alpha_j = \frac{1}{c_j}, \ \forall j\in\mathcal{G} \\
    &\hat{\beta_i} = \frac{1}{b_i}\frac{\lambda^T\lambda}{\lambda^TN(\lambda)^TN(\lambda)\lambda}, \ \forall i\in\mathcal{S}\\
    &\lambda = \delta d, \  \delta^{-1}  = \left(\sum_{i\in \mathcal{S}}\frac{1}{b_i}\frac{d^Td}{d^TN(d)^TN(d)d}+\sum_{j\in \mathcal{G}}\frac{1}{c_j} \right)
\end{align}
\label{competitive_eqbm_traditional}
\end{subequations}
\end{theorem}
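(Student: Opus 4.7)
The plan is to derive the equilibrium by first solving each participant's bidding problem separately (yielding $\alpha_j$ and $\hat{\beta}_i$ as functions of the price $\lambda$), then using the market-clearing KKT conditions to pin down the direction and magnitude of $\lambda$, and finally verifying uniqueness.

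First I would handle the generator bidding problem~\eqref{generator_profit_problem_parameter}. Assuming $\lambda\neq 0$ (which is implied by inelastic demand $d\neq 0$), the objective is a strictly concave quadratic in $\alpha_j$. Setting its derivative to zero gives
\[
\lambda^T\lambda - \alpha_j c_j\lambda^T\lambda = 0 \implies \alpha_j = \tfrac{1}{c_j},
\]
unique by strict concavity. Next, for the storage bidding problem~\eqref{storage_profit_problem_parameter}, the apparent difficulty is that $N(\lambda,\hat{\beta}_i)$ is the matrix associated with the profile $\hat{\beta}_i\lambda$, so it could in principle depend on $\hat{\beta}_i$. Here Lemma~\ref{N_indep_beta} is the key tool: for $\hat{\beta}_i>0$, $N(\hat{\beta}_i\lambda)=N(\lambda)$, so the objective reduces to a strictly concave quadratic in $\hat{\beta}_i$ with first-order condition
\[
\lambda^T\lambda - b_i\hat{\beta}_i\,\lambda^T N(\lambda)^T N(\lambda)\lambda = 0,
\]
giving the stated expression for $\hat{\beta}_i$ (uniquely, again by strict concavity, noting that $N(\lambda)\lambda=\nu_i\neq 0$ whenever storage cycles).

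Second, I would turn to the market-clearing stage. The KKT conditions of the \emph{SYSTEM} problem~\eqref{system_problem} with the single binding constraint~\eqref{power_bal_const} (with multiplier $\lambda$) yield $g_j=\alpha_j\lambda$ and $u_i=\hat{\beta}_i\lambda$. Substituting these into the power-balance constraint produces
\[
d = \Bigl(\sum_{j\in\mathcal{G}}\alpha_j + \sum_{i\in\mathcal{S}}\hat{\beta}_i\Bigr)\lambda,
\]
which immediately forces $\lambda$ to be collinear with $d$, so I write $\lambda=\delta d$ with $\delta>0$. Applying Lemma~\ref{N_indep_beta} a second time gives $N(\lambda)=N(\delta d)=N(d)$, which rewrites the storage bid as $\hat{\beta}_i=\frac{1}{b_i}\frac{d^Td}{d^TN(d)^TN(d)d}$. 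Inserting these into the power balance and solving for $\delta$ yields the stated expression for $\delta^{-1}$.

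Finally, for uniqueness I would observe that the three conditions above were derived from necessary first-order conditions whose strict concavity (in $\alpha_j$ and $\hat{\beta}_i$) and the linear independence of the collinearity equation $\lambda\parallel d$ determine each variable uniquely. The main obstacle is the non-smoothness of $N(\cdot)$ noted in Remark~\ref{multiple_Nmatrix}: if $\lambda$ corresponds to a profile with multiple admissible incidence matrices $N_k(\lambda)$, one must argue that $N_k(\lambda)\lambda=N(\lambda)\lambda=\nu$ for every $k$, so the quadratic form $\lambda^TN(\lambda)^TN(\lambda)\lambda=\nu^T\nu$ is well-defined independent of the choice of $N_k$. This observation, together with Lemma~\ref{N_indep_beta}, is precisely what rescues the smooth quadratic reduction of the storage profit problem and makes the equilibrium characterization valid.
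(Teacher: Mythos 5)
Your proposal is correct and follows essentially the same route as the paper's proof: invoke Lemma~\ref{N_indep_beta} to make the storage objective a concave quadratic in $\hat{\beta}_i$, take first-order conditions for $\alpha_j$ and $\hat{\beta}_i$, substitute the supply-function bids into the power-balance constraint to force $\lambda=\delta d$, and apply the lemma again (with Remark~\ref{multiple_Nmatrix} for the non-smooth points) to express $\hat{\beta}_i$ and $\delta^{-1}$ in terms of $d$. Your explicit treatment of the non-differentiability of $N(\cdot)$ and of $N(\lambda)=N(d)$ is slightly more careful than the paper's, but it is the same argument.
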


\ifthenelse{\boolean{arxiv}}{
\begin{proof}
Using Lemma~\ref{N_indep_beta} $N(\lambda,\hat{\beta_i}) = N(\lambda)$ and for ease of notation we denote $N:= N(\lambda)$. Now we solve~\eqref{storage_profit_problem_parameter} for optimal decision $\hat{\beta_i}$ as
%\begin{small}
%\begin{subequations}
%\begin{eqnarray}
\begin{align}
    & \frac{\partial \pi_{u_i}}{\partial \hat{\beta_i}} = \lambda^T\lambda - b\hat{\beta_i}\lambda^TN^TN\lambda - b\hat{\beta_i}\frac{\partial }{\partial \hat{\beta_i}}(\lambda^T N^TN\lambda) = 0 \nonumber\\
    & \implies \hat{\beta_i} = \frac{1}{b_i} \frac{\lambda^T\lambda}{\lambda^TN^TN\lambda}, \ \forall i \in \mathcal{S}. \label{opt_beta_hat}
\end{align}
%\end{eqnarray}
%\end{subequations}
%\end{small}
since $N$ is independent of parameter $\hat{\beta}_i$. Similarly for the individual generator's decision parameter solving~\eqref{generator_profit_problem_parameter}
%\begin{subequations}
\begin{align}
    & \frac{\partial \pi_{g_j}}{\partial {\alpha_j}} = \lambda^T\lambda - \alpha_jc_j\lambda^T\lambda = 0  \implies {\alpha_j} = \frac{1}{c_j} \label{opt_alpha}, \ \forall j \in \mathcal{G}
\end{align}
%\end{subequations}
For market-clearing prices $\lambda$, given the linear supply function bid~\eqref{generator_linear_supply},\eqref{storage_linear_supply}, the power balance constraint in~\eqref{power_bal_const} implies
\begin{subequations}
\begin{align}
    d = & \sum_{g_j,j\in\mathcal{G}}g_j +\sum_{u_i,i\in\mathcal{S}} u_i= \sum_{j\in\mathcal{G}}\alpha_j\lambda +\sum_{i\in\mathcal{S}}\hat{\beta_i}\lambda\\
    \implies \lambda = & \left(\sum_{i\in \mathcal{S}}\hat{\beta_i}+\sum_{j\in \mathcal{G}}\alpha_j \right)^{-1}d
\end{align}
\end{subequations}
Since $\lambda$ is proportional to $d$, let's assume $\exists$ $\delta\in\mathbb{R}$ such that $\lambda = \delta d$. Using~\eqref{opt_alpha} and \eqref{opt_beta_hat} we have a unique $\delta$ as
\[
    \delta^{-1} = \left(\sum_{i\in \mathcal{S}}\hat{\beta_i}+\sum_{j\in \mathcal{G}}\alpha_j \right) = \left(\sum_{i\in \mathcal{S}}\frac{1}{b_i} \frac{d^Td}{d^TN^TNd}+\sum_{j\in \mathcal{G}}\frac{1}{c_j} \right)
\]
Hence, under price-taking assumption the tuple $(\hat{\beta_i}, i\in\mathcal{S},\alpha_j,j\in\mathcal{G},\lambda)$ uniquely exists. 
\end{proof}
}{The proof is provided in~\cite{bansal2021market}.} Though the market achieves unique competitive equilibrium, the optimal decision parameter of the storage is a temporally coupled function of the market-clearing prices unknown to the participants beforehand.  Moreover, the equilibrium achieved requires restrictive conditions on the market to align its solution with the social planner's optimum as characterized in the next subsection. In particular, the linear supply function bidding mechanism fails to reflect the true cost of storage participation even in the simple market setting. 
%\enrique{Some of the explanation below should come here. Especially what's related to the dependence of the equilibrium on the prices.} 
 
\subsection{Social Welfare Misalignment}

%While the market achieves unique competitive equilibrium under linear supply function bidding participation, the optimal decision parameter of the storage unit $\hat{\beta_i}$ is based on the equilibrium market prices $\lambda$ which is against the basic notion of linear supply function based participation in market. Moreover, the equilibrium achieved may not be desirable from planner's perspective as in general it does not align with the social planner's problem. In particular, the traditional linear supply function bidding mechanism fails to reflect the true cost of storage participation in the market. The following theorem characterizes the gap between the participant bid based economic dispatch problem and social planner's problem.

We next characterize the gap between the market equilibrium and social optimum in the following theorem.

\begin{theorem}
    % Considering only the power balance constraint~\eqref{power_bal_const}, 
    The competitive equilibrium $(g_j^{*},\ j\in \mathcal{G}, u_i^{*}, \ i\in\mathcal{S}, \lambda^{*})$ for the \textit{SYSTEM}~\eqref{system_problem} solves the \textit{SOCIAL PLANNER}~\eqref{planner_problem} if and only if there exists convex coefficients $\gamma_k \ge 0, \ \sum_{k=1}^m\gamma_k =1$,  such that the following holds
    \begin{align}
        \sum_{k=1}^m\gamma_kN_k^T(d)N_k(d)d = \frac{d^TN(d)^TN(d)d}{d^Td}d.
        \label{comp_social_solve_cond}
    \end{align}
\end{theorem}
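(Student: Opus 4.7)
The plan is to match the KKT conditions of the \emph{SOCIAL PLANNER}~\eqref{planner_problem} and the \emph{SYSTEM}~\eqref{system_problem} in the simplified setting where only~\eqref{power_bal_const} is active. Both problems are convex (the generator cost is quadratic and $C_i$ is convex by Remark~\ref{multiple_Nmatrix}), so KKT conditions are both necessary and sufficient. Introducing the multiplier $\mu\in\mathbb{R}^T$ for the power-balance constraint, stationarity reads $c_j g_j = \mu$ for each generator and $\mu \in \partial C_i(u_i)$ for each storage. Since $C_i$ is piecewise quadratic, Remark~\ref{multiple_Nmatrix} together with the analysis in~\cite{shi2017optimal,bansal2020storage} identifies its subdifferential with $\mathrm{conv}\{\,b_i N_k(u_i)^T N_k(u_i) u_i : k=1,\dots,m\,\}$; equivalently, $\mu = b_i \sum_k \gamma_k N_k(u_i)^T N_k(u_i) u_i$ for some convex weights $\{\gamma_k\}$.

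Next I substitute the competitive equilibrium $g_j^* = \lambda^*/c_j$, $u_i^* = \hat{\beta}_i \lambda^*$, and $\lambda^* = \delta d$ from Theorem~\ref{Thrm1}. The generator condition is automatic and identifies $\mu = \delta d$. For storage, Lemma~\ref{N_indep_beta} (applied to each selection $N_k$) yields $N_k(u_i^*) = N_k(\hat{\beta}_i \delta d) = N_k(d)$, so stationarity reduces to $\delta d = b_i \hat{\beta}_i \delta \sum_k \gamma_k N_k(d)^T N_k(d) d$. Inserting the closed form $b_i \hat{\beta}_i = d^T d / (d^T N(d)^T N(d) d)$ supplied by Theorem~\ref{Thrm1} and canceling the common factor $\delta$ delivers exactly~\eqref{comp_social_solve_cond}. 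The right-hand side is independent of $i$ and the family $\{N_k(d)\}$ is shared across storages, so a single convex combination $\{\gamma_k\}$ certifies stationarity for every storage unit simultaneously, which matches the index-free statement.

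Sufficiency then follows by reading the derivation in reverse: given any $\{\gamma_k\}$ satisfying~\eqref{comp_social_solve_cond}, the competitive equilibrium $(g_j^*, u_i^*, \lambda^*)$ verifies all KKT conditions of the \emph{SOCIAL PLANNER} with multiplier $\mu = \lambda^*$, and global optimality follows from convexity.

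I expect the main obstacle to be the non-differentiability of $C_i$ on the boundaries between distinct Rainflow decompositions. A naive gradient-based argument would demand $N(d)^T N(d) d$ itself to be colinear with $d$, which is far too restrictive. The convex-hull characterization of $\partial C_i$ relaxes this to a colinearity of a suitable convex average, and Lemma~\ref{N_indep_beta} is the tool that strips the scaling factors $\hat{\beta}_i \delta$ out of the $N_k$ matrices, so the condition collapses to the single clean relation on $d$ stated in~\eqref{comp_social_solve_cond}.
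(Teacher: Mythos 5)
Your proposal is correct and follows essentially the same route as the paper's proof: matching the KKT conditions of the \emph{SOCIAL PLANNER} (with the subdifferential of $C_i$ expressed as a convex combination of the $b_i N_k^T N_k u_i$ terms) against the competitive equilibrium of Theorem~\ref{Thrm1}, then using Lemma~\ref{N_indep_beta} and $\lambda^* = \delta d$ to strip the scaling factors and reduce to~\eqref{comp_social_solve_cond}, with sufficiency following from convexity. The only cosmetic difference is that you substitute the closed form of $b_i\hat{\beta}_i$ directly into the stationarity condition, whereas the paper equates the two expressions for $\lambda^*$ in an iff chain; the content is identical.
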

\begin{proof}
    Since the social planner problem~\eqref{planner_problem} is convex and all the constraints are affine, linear constraint qualification is satisfied and the associated KKT conditions are both sufficient and necessary. Given that $(g_j^{*},\ j\in \mathcal{G}, u_i^{*}, \ i\in\mathcal{S}, \lambda^{*})$ also solves the social planner problem, the set of the solution must satisfy the KKT conditions given by:
    \begin{subequations}
    \begin{align}
        & c_jg^{*}_j = \lambda^{*}, \ \forall j\in\mathcal{G} \label{gen_marginal_cond}\\
        & \sum_{k=1}^m\gamma_kb_iN_k(u_i^{*})^TN_k(u_i^{*})u^{*}_i = \lambda^{*}, \ \forall i\in\mathcal{S}. \label{storage_marginal_cond}
    \end{align}
    \end{subequations}
along with the primal feasibility constraints given by~\eqref{power_bal_const}. Here $\gamma_k, \ k\in \{1,2,...,m\}$ are the convex coefficient such that~\eqref{storage_marginal_cond} also holds at the non-differentiable optimal solution $u_i^{*}$. Since $u_i^{*} = \hat{\beta_i}\lambda^{*}, \forall i\in\mathcal{S}$ we rewrite~\eqref{storage_marginal_cond} as 
%\begin{subequations}
\begin{align}
        &\lambda^{*} =  \sum_{k=1}^m\gamma_kb_iN_k(\lambda^{*})^TN_k(\lambda^{*})u^{*}_i, \ \forall i\in\mathcal{S}. \label{storage_marginal_cond_2}
\end{align}
%\end{subequations}

Also from the competitive equilibrium in Theorem~\ref{Thrm1} we have 
\begin{align}
    & \lambda^{*} = \frac{1}{\hat{\beta}_i}u_i^{*} = b_i\frac{\lambda^{{*}^T}N(\lambda^{*})^TN(\lambda^{*})\lambda^{*}}{\lambda^{{*}^T}\lambda^{*}}u_i^{*}, \ \forall i\in\mathcal{S} \label{thrm_comp_cond}
\end{align}
Here $N(\lambda^{*})u_i^{*} = N_k(\lambda^{*})u_i^{*}$ for any $k \in\{1,2,...,m\}$, recall Remark~\ref{multiple_Nmatrix}. Now combining  equations~\eqref{storage_marginal_cond_2} and~\eqref{thrm_comp_cond} we have the following relation $\forall i\in\mathcal{S}$
%\begin{subequations}
\begin{align}
    & \sum_{k=1}^m\gamma_kb_iN_k(\lambda^{*})^TN_k(\lambda^{*})u^{*}_i = b_i\frac{\lambda^{{*}^T}N(\lambda^{*})^TN(\lambda^{*})\lambda^{*}}{\lambda^{{*}^T}\lambda^{*}}u_i^{*} \nonumber \\
    & \iff  \sum_{k=1}^m\gamma_kN_k(\lambda^{*})^TN_k(\lambda^{*})\lambda^{*} = \frac{\lambda^{{*}^T}N(\lambda^{*})^TN(\lambda^{*})\lambda^{*}}{\lambda^{{*}^T}\lambda^{*}}\lambda^{*} \nonumber \\
    & \iff \sum_{k=1}^m\gamma_kN_k(d)^TN_k(d)d = \frac{d^TN(d)^TN(d)d}{d^Td}d, \nonumber
\end{align}
%\end{subequations}
where the second last equality holds due to $u_i^{*} = \hat{\beta}_i\lambda^{*}$ and the last equality holds due to the relation $\lambda^{*} = \delta d$ from~\eqref{competitive_eqbm_traditional}. %for known inelastic demand $d$. %and the independence of matrix $N$ from scalar $\delta$ from Lemma~\ref{N_indep_beta}.
\end{proof}

While the linear supply function bidding mechanism does reflect the quadratic cost function for generators\footnote{ For the generator cost function in~\eqref{planner_obj} with $c_j\neq 0, a_j \neq 0$, a time dependent bid $\alpha_j(t) \in \mathbb{R}^{T}$ can be used to reflect a general cost function.}, in general it fails to reflect the incentive of storage unit in the market as the associated condition~\eqref{comp_social_solve_cond} may not hold. As an example where this condition holds, consider a market with 2 time periods $t =\{1,2\}$ and the inelastic demand $d = d_0[1,1]^T$ such that $d_0\in\mathbb{R}_{+}$. Since $u \propto d$ at the market equilibrium, the only associated matrix $N = \big(\begin{smallmatrix}
  1 & 1\\
  0 & 0
\end{smallmatrix}\big)$,  such that
\begin{align}
    & N(d)^TN(d)d = \begin{pmatrix} 1&1\\1&1\end{pmatrix}d = \begin{pmatrix} 2d_0\\2d_0\end{pmatrix} \nonumber \\
    & \frac{d^TN(d)^TN(d)d}{d^Td}d = \frac{4d_0^2}{2d_0^2}d = N(d)^TN(d)d. \nonumber
\end{align}

This misalignment between the market equilibrium and the social optimum motivates our mechanism design in the next section.

\section{Cycle Aware Market Model} \label{sec_4}

In this section we propose a new market mechanism that incentivizes both generators and storage to bid in a manner that reflects their true cost under price-taking assumptions. 

\subsection{Market Mechanism}

We consider generators that provide the linear supply function bids defined in~\eqref{generator_linear_supply} and generalize this idea to propose an energy-cycling function bid for storage. In particular, the price-taking storage $i$ indicates the schedule of cycle depths as function of per-cycle prices given by
\begin{equation}
    \nu_i = \beta_i \theta_i \ .\label{slot_agnostic_supply_function} 
\end{equation}
Here $\theta_i \in \mathbb{R}^{T}$ are per-cycle prices aimed at incentivizing storage participation.
This function is parameterized by a constant $\beta_i \ge 0$ and indicates all the charging or discharging half-cycle depths $\nu_i$ the storage is willing to undergo at the price $\theta_i$.  With per-cycle prices $\theta_i$ from the market, the storage unit $i$ can choose its bid in order to maximize its profit, which as function of the bid $\beta_i$, is given by:
%\begin{subequations}
\begin{align}
    \pi_{u_i}(\beta_i,\theta_i) = & \theta_i^T\nu_i - \frac{b_i}{2}u_i^TN(u_i)^TN(u_i)u_i= \theta_i^T\nu_i - \frac{b_i}{2}\nu_i^T\nu_i \nonumber\\
    = & \beta_i\theta_i^T\theta_i-\frac{b_i\beta_i^2}{2}\theta_i^T\theta_i, \label{str_profit_slot_agnostic_bid}
\end{align}
%\end{subequations}
where we used~\eqref{slot_agnostic_supply_function} and \eqref{depth_relation} to obtain~\eqref{str_profit_slot_agnostic_bid}. A price-taking storage owner seeks to maximize~\eqref{str_profit_slot_agnostic_bid}, i.e., find $\beta_i$ that satisfies:
\begin{align}\label{cycle_aware_opt_beta}
        \frac{\partial \pi_{u_i}}{\partial \beta_i} = \left(1-b_i\beta_i\right))\theta_i^T\theta_i = 0 \implies \beta_i = \frac{1}{b_i}, \ \forall i\in\mathcal{S}
\end{align}
Thus, this cycle aware market mechanism leads to an optimal bid $\beta_i$ that is not only independent of prices in the market but also truthful.

We now illustrate a cycle aware market-clearing where both generator and storage are incentive compatible and market aligns with social planner's problem while satisfying the demand. In this setting, the market operator collects supply function bids from all the participants and solves the following economic dispatch problem that minimizes the total cost of generator and storage dispatch:

\noindent \emph{CYCLE AWARE SYSTEM:}
\begin{subequations}
\begin{align}
    \min_{(u_i,\nu_i), i\in \mathcal{S}, g_j,j \in \mathcal{G}} & \ \sum_{i \in \mathcal{S}}\frac{1}{2\beta_i}v_i^Tv_i+\sum_{j \in \mathcal{G}}\frac{1}{2\alpha_j}g_j^Tg_j \label{mixed_market_obj}\\
    \textrm{s.t.} \ \ \ \ \ & v_i = N(u_i)u_i, \ i\in \mathcal{S} \label{rainflow_const}\\
    & %(u_i, i\in \mathcal{S}, g_j,j \in \mathcal{G}) \in \mathcal{H},
    \eqref{gen_limit_ineq},\eqref{SoC_control_limit}, \eqref{periodicity_const}, \eqref{Soc_limit_rate_ineq}, \eqref{power_bal_const}\nonumber%
\end{align}\label{mixed_market_dispatch}%
\end{subequations}%
where~\eqref{rainflow_const} implements the Rainflow algorithm. The optimal solution to cycle aware system \eqref{mixed_market_dispatch} gives the dispatch and two set of prices. More precisely as part of the market settlement, generator $j$ produces $g_j$ and gets paid $\Theta_j^Tg_j$ where 
\[
    \Theta_j = \lambda + \underline{\eta}_j-\overline{\eta}_j. 
\]
Here $\underline{\eta}_j, \overline{\eta}_j$ are the dual variables associated with the generator capacity constraint~\eqref{gen_limit_ineq} and the vector $\lambda$ denotes the market-clearing prices or the dual variable associated with the constraint~\eqref{power_bal_const}. The storage unit $i$ produces a cycle depth schedule $\nu_i$ and gets paid $\theta_i^T\nu_i$ with the prices $\theta_i$ given by the dual variable associated with the constraint~\eqref{rainflow_const}. %i.e. the market settlement is indifferent for storage as shown in the next subsection. 
%\enrique{We should comment that this problem can be solved efficiently?}

The piece-wise linear constraint~\eqref{rainflow_const} makes the dispatch problem~\eqref{mixed_market_dispatch} non-convex and challenging to solve numerically. However, substituting the rainflow constraint~\eqref{rainflow_const} in the cost function objective~\eqref{mixed_market_obj} leads to an equivalent convex optimization problem that can be solved by the convex programming as formalized in the following proposition.
\begin{proposition}\label{optimality_lemma}
        Any locally optimal solution $(g_j,j\in\mathcal{G}, u_i, i\in\mathcal{S}, \nu_i, i\in\mathcal{S}, \lambda, \theta_i, i\in\mathcal{S})$ of the cycle aware system~\eqref{mixed_market_dispatch} is also a globally optimal solution.
\end{proposition}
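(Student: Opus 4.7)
The plan is to reduce the cycle aware system~\eqref{mixed_market_dispatch} to an equivalent convex program by eliminating the non-convex Rainflow constraint~\eqref{rainflow_const} and then invoke the standard fact that every local minimum of a convex program is a global minimum.

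First, I would use~\eqref{rainflow_const} to substitute $\nu_i = N(u_i)u_i$ directly into~\eqref{mixed_market_obj}, producing the reduced problem
\begin{equation*}
  \min_{g_j, u_i} \ \sum_{i\in\mathcal{S}} \frac{1}{2\beta_i}\, u_i^T N(u_i)^T N(u_i)\, u_i + \sum_{j\in\mathcal{G}} \frac{1}{2\alpha_j}\, g_j^T g_j
\end{equation*}
subject only to the linear constraints~\eqref{gen_limit_ineq},~\eqref{SoC_control_limit},~\eqref{periodicity_const},~\eqref{Soc_limit_rate_ineq},~\eqref{power_bal_const}. By the storage-cost construction (the Proposition in Section~\ref{sec_2}, together with Remark~\ref{multiple_Nmatrix} which handles the non-differentiable points by allowing any of the admissible matrices $N_k(u_i)$), the map $u_i \mapsto u_i^T N(u_i)^T N(u_i) u_i$ coincides with $\nu_i^T \nu_i$ under the Rainflow assignment and is known to be convex (indeed piece-wise quadratic and convex as already noted in~\cite{shi2017optimal,bansal2020storage}). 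Since the generator cost is strictly convex quadratic and the feasible set is a polyhedron, the reduced problem is a convex program.

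Next I would establish the equivalence between the reduced program and the original problem~\eqref{mixed_market_dispatch} at the level of local optima. Given a feasible point $(g_j, u_i, \nu_i)$ of~\eqref{mixed_market_dispatch}, the constraint~\eqref{rainflow_const} forces $\nu_i = N(u_i) u_i$ so the $\nu_i$ variables are determined by $u_i$; conversely any $(g_j, u_i)$ feasible in the reduced program extends uniquely (modulo the choice of $N_k$ at non-differentiable points, all of which yield the same $\nu_i$) to a feasible point of the original problem with equal objective value. Consequently, a local minimizer of~\eqref{mixed_market_dispatch} projects to a local minimizer of the reduced convex program in the $(g_j, u_i)$ coordinates, and vice versa. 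Applying the standard convex-optimization fact that local minima are global, I conclude that the projected point is a global minimizer of the reduced program, and therefore lifts back to a global minimizer of~\eqref{mixed_market_dispatch}.

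The main obstacle is the non-differentiability of $N(\cdot)$, which is exactly the subtlety flagged in Remark~\ref{multiple_Nmatrix}: at points where several admissible $N_k(u_i)$ coexist, I must verify that a \emph{local} optimum of the non-convex formulation really does correspond to a local optimum of the reduced convex problem, despite the multi-valued nature of the Rainflow map. The clean way around this is to observe that all admissible $N_k(u_i)$ yield the same product $N_k(u_i) u_i = \nu_i$, so the reduced objective is single-valued even though its representation is not, and the subdifferential characterization of convex local optimality applies uniformly across the branches. Once this technicality is handled, global optimality follows immediately from convexity.
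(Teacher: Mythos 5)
Your reduction step is exactly the paper's: substitute $\nu_i = N(u_i)u_i$ from~\eqref{rainflow_const} into~\eqref{mixed_market_obj} to obtain an equivalent problem over $(g_j,u_i)$ with the convex storage cost $\frac{1}{2\beta_i}u_i^TN(u_i)^TN(u_i)u_i$ and only the linear constraints~\eqref{gen_limit_ineq},~\eqref{SoC_control_limit},~\eqref{periodicity_const},~\eqref{Soc_limit_rate_ineq},~\eqref{power_bal_const}. Where you diverge is in how local-to-global optimality is transferred. The paper writes out the KKT systems of both formulations, shows the stationarity conditions coincide (using $\theta_i = \nu_i/\beta_i$ and the convex combinations $\gamma_k$ over the admissible $N_k$ matrices at kinks), and then closes with a contradiction argument: a strictly better point for~\eqref{mixed_market_dispatch} would be a KKT point, hence a strictly better optimum, of the convex reduced problem. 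You instead argue directly that the feasible sets are in bijection via $(g,u)\mapsto(g,u,N(u)u)$ with equal objective values, so local minimizers correspond, and then invoke local-equals-global for the convex reduced program. Your route is more elementary and sidesteps the delicate business of stating KKT necessity for a nonsmooth, nonconvex program; the paper's KKT route has the side benefit of explicitly exhibiting the dual variables $\lambda$ and $\theta_i$ that are then reused for the market settlement and in the proof of Theorem~\ref{optimal_theorem}. One point you should make explicit: the local-minimizer correspondence needs continuity of the Rainflow map $u\mapsto N(u)u$, not just its single-valuedness at non-differentiable points --- continuity (which follows from the map being single-valued and piecewise linear on closed polyhedral pieces) is what guarantees that a reduced-feasible point near $(g^*,u^*)$ lifts to a point of~\eqref{mixed_market_dispatch} inside the neighborhood where local optimality of $(g^*,u^*,\nu^*)$ applies. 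With that sentence added, your argument is complete and correct.
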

\ifthenelse{\boolean{arxiv}}{
\begin{proof}
    For the dispatch problem~\eqref{mixed_market_dispatch} denote the dual variable associated with the constraint~\eqref{periodicity_const},\eqref{power_bal_const},\eqref{rainflow_const} as $\delta_i, i\in\mathcal{S}$, $\lambda$, and $\theta_i, i\in\mathcal{S}$ respectively. Further define $(\underline{\eta}_j,\overline{\eta}_j), j \in \mathcal{G}$, $ (\overline{\mu}_i,\underline{\mu}_i), i\in\mathcal{S}, \mbox{ and }(\underline{\nu}_i,\overline{\nu}_i), i\in\mathcal{S}$ to be the non-negative dual variables associated with the inequality constraints~\eqref{gen_limit_ineq}, \eqref{SoC_control_limit},\eqref{Soc_limit_rate_ineq}, respectively. The necessary KKT conditions require the stationarity condition: 
    \begin{subequations}\label{cycle_aware_market_kkt}
    \begin{align}
        &\lambda^{*} = \frac{g_j^{*}}{\alpha_j}+\overline{\eta}_j^{*}-\underline{\eta}_j^{*}, \ \forall j\in\mathcal{G} \label{cycle_system_kkt_1}  \\
        &\theta_i^{*} = \frac{\nu_i^{*}}{\beta_i}, \ \forall i\in\mathcal{S} \label{cycle_system_kkt_2} \\
        & \lambda^{*} =\sum_{k=1}^m\hat{\gamma_k}N_k(u_i^{*})^T\theta_i^{*} + \delta_i^{*}\mathbf{1}+\Tilde{A}^T(\overline{\mu}_i^{*}-\underline{\mu}_i^{*}) \nonumber \\
        & \ \ \ \ \ \ \ \  +\overline{\nu}_i^{*}-\underline{\nu}_i^{*}, \ \forall i\in\mathcal{S} \label{cycle_system_kkt_3}
    \end{align}
    \end{subequations}
    Here $\hat{\gamma_k} \ge 0, \ \sum_{k=1}^m\hat{\gamma_k}=1$ denote the convex coefficients associated with $m$ possible $N$ matrices. The complimentary slackness require $\forall j \in \mathcal{G}$:
    \begin{align}
        &{\overline{\eta}_j^{*}}^T(g_j^{*}-\overline{g}_j\mathbf{1}) = 0 \ &{\underline{\eta}_j^{*}}^T(\underline{g}_j\mathbf{1}-g^{*}_j) = 0  \label{cycle_aware_market_kkt_gen_comp}
    \end{align}
    and $\forall i \in \mathcal{S}$:
    \begin{subequations}\label{cycle_aware_market_kkt_str_comp}
    \begin{align}
        & {\overline{\mu}_i^{*}}^T(\Tilde{A}u^{*}_i-x_o\mathbf{1}) = 0 \ &{\underline{\mu}_i^{*}}^T((x_o-1)\mathbf{1}-\Tilde{A}u^{*}_i) = 0 \label{cycle_system_kkt_4}\\
        &{\overline{\nu}_i^{*}}^T(u_i^{*}-\overline{u}_i\mathbf{1}) = 0 \ &{\underline{\nu}_i^{*}}^T(\underline{u}_i\mathbf{1}-u^{*}_i) = 0  \label{cycle_system_kkt_5}
    \end{align}
    \end{subequations}
    Furthermore the primal feasibility are given by the constraints~\eqref{gen_limit_ineq},\eqref{SoC_control_limit},\eqref{periodicity_const},\eqref{Soc_limit_rate_ineq},\eqref{power_bal_const},\eqref{rainflow_const} while the dual feasibility requires non-negativity of the dual variables associated with the inequality constraints~\eqref{gen_limit_ineq},\eqref{SoC_control_limit} and \eqref{Soc_limit_rate_ineq}.  
    Now we rewrite the dispatch problem~\eqref{mixed_market_dispatch} and then use method of contradiction to prove the statement. The dispatch problem~\eqref{mixed_market_dispatch} can be rewritten as below:
    \begin{align}\label{convex_dispatch}
         \min_{u_i,i\in\mathcal{S},g_j,j\in\mathcal{G}} \ & \sum_{j\in\mathcal{G}}\frac{1}{2\alpha_j}g_j^Tg_j+\sum_{i\in\mathcal{S}}\frac{1}{2\beta_i}u_i^TN(u_i)^TN(u_i)u_i\\
        \textrm{s.t.} \ &  \eqref{gen_limit_ineq},\eqref{SoC_control_limit}, \eqref{periodicity_const}, \eqref{Soc_limit_rate_ineq}, \eqref{power_bal_const} \nonumber \nonumber
    \end{align}
    where we substitute the rainflow constraint~\eqref{rainflow_const} in the objective~\eqref{mixed_market_obj}. Note that~\eqref{convex_dispatch} is convex~\cite{shi2017optimal, bansal2020storage}. Here we abuse the notation and denote the dual variable associated with the constraint~\eqref{periodicity_const}, \eqref{power_bal_const}, and \eqref{rainflow_const} as $\delta_i, i\in\mathcal{S}$, and $\lambda$ respectively. Also define $(\underline{\eta}_j,\overline{\eta}_j), j \in \mathcal{G}$, $ (\overline{\mu}_i,\underline{\mu}_i), i\in\mathcal{S}, \mbox{ and }(\underline{\nu}_i,\overline{\nu}_i), i\in\mathcal{S}$ to be the non-negative dual variables associated with the inequality constraints~\eqref{gen_limit_ineq},\eqref{SoC_control_limit},\eqref{Soc_limit_rate_ineq}, respectively. The necessary and sufficient KKT conditions require the stationarity:  
    \begin{subequations}
    \label{convex_dispatch_kkt}
    \begin{align}
        &\lambda^{*} = \frac{g_j^{*}}{\alpha_j}+\overline{\eta}_j^{*}-\underline{\eta}_j^{*}, \ \forall j\in\mathcal{G} \label{convex_dispatch_kkt_1}  \\
        &\theta_i^{*} = \frac{\nu_i^{*}}{\beta_i}, \ \forall i\in\mathcal{S} \label{convex_dispatch_kkt_2} \\
        & \lambda^{*} =\frac{1}{\beta_i}\sum_{k=1}^m\gamma_kN_k(u_i^{*})^TN_k(u_i^{*})u_i^{*} + \delta_i^{*}\mathbf{1}+\Tilde{A}^T(\overline{\mu}_i^{*}-\underline{\mu}_i^{*}) \nonumber \\
        & \ \ \ \ \ \ \ \  +\overline{\nu}_i^{*}-\underline{\nu}_i^{*}, \ \forall i\in\mathcal{S} 
         \label{convex_dispatch_kkt_3}
    \end{align}
    \end{subequations}
    where $\gamma_k \ge 0, \ \sum_{k=1}^m\gamma_k=1$ denote the convex coefficients associated with $m$ possible $N$ matrices. Similarly the complimentary slackness is given by~\eqref{cycle_aware_market_kkt_gen_comp},\eqref{cycle_aware_market_kkt_str_comp} while the primal feasibility are given by the constraints~\eqref{gen_limit_ineq},\eqref{SoC_control_limit},\eqref{periodicity_const},\eqref{Soc_limit_rate_ineq},\eqref{power_bal_const},\eqref{rainflow_const}. And the dual feasibility requires non-negativity of the dual variables associated with the inequality constraints~\eqref{gen_limit_ineq},\eqref{SoC_control_limit} and \eqref{Soc_limit_rate_ineq}.   
    
    For any optimal solution of~\eqref{convex_dispatch} given by $(g_j^{*},j\in\mathcal{G}, u_i^{*}, i\in\mathcal{S})$, $\exists$ solution $(g_j^{*},j\in\mathcal{G}, u_i^{*}, i\in\mathcal{S}, \nu_i^{*}, i\in\mathcal{S})$ that also satisfies the KKT conditions~\eqref{cycle_aware_market_kkt} where $\nu_i^{*} = N(u_i^{*})u_i^{*}, \forall i\in\mathcal{S}$. Therefore it is locally optimal solution of the disptach problem~\eqref{mixed_market_dispatch}. WLOG assume any optimal solution given by $(\hat{g}_j,j\in\mathcal{G}, \hat{u}_i, i\in\mathcal{S}, \hat{\nu}_i, i\in\mathcal{S})$ with strictly smaller cost to the dispatch problem~\eqref{mixed_market_dispatch}. Since $(\hat{g}_j,j\in\mathcal{G}, \hat{u}_i, i\in\mathcal{S})$ also satisfies the KKT condition of the convex problem~\eqref{convex_dispatch}, therefore it is also an optimal solution with strictly smaller cost which is a contradiction. Therefore any locally optimal solution given by $(g_j^{*},j\in\mathcal{G}, u_i^{*}, i\in\mathcal{S}, \nu_i^{*}, i\in\mathcal{S}, \lambda^{*}, \theta_i^{*}, i\in\mathcal{S})$ is also a globally optimal solution.
\end{proof}
}{The proof is provided in~\cite{bansal2021market}.} \textcolor{black}{The equivalent convex problem formulation ensures that the optimal dispatch and clearing prices are incentive compatible~\cite{bansal2020storage}. We discuss the competitive equilibrium in such a market in the next subsection.}

\subsection{Market Equilibrium}

We next redefine and characterize the competitive equilibrium of the market competition under the proposed mechanism.
\begin{definition}
Under the price-taking assumptions, the bids $(\beta_i,i\in \mathcal{S}, \alpha_j,j \in \mathcal{G})$ form a competitive equilibrium if the following conditions are satisfied:
\begin{enumerate}
    \item For each generator $j \in \mathcal{G}$, the bid $\alpha_j$ maximizes their individual profit in the market
    \item For each storage element $i \in \mathcal{S}$, the bid $\beta_i$ maximizes their individual profit in the market
    \item For each storage element $i \in \mathcal{S}$, the Rainflow constraint is satisfied with per-cycle prices $\theta_i$.
    \item The inelastic demand $d \in \mathbb{R}^T$ is satisfied with the market-clearing prices $\lambda$.
\end{enumerate}
\end{definition}

The following is our main result and highlights the alignment of proposed market mechanism with the social planner's problem.
%\enrique{I think that Lemma 2 should be a proposition in the previous subsection. It is not related to the equilibrium only, but also is important for the computational tractability of the problem.}

\begin{theorem}\label{optimal_theorem}
    The competitive equilibrium of the cycle aware market mechanism~\eqref{mixed_market_dispatch} also solves the SOCIAL PLANNER problem~\eqref{planner_problem}. 
\end{theorem}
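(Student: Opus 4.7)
The plan is to exploit the truthful-bidding structure induced by the cycle-aware mechanism to show that the market-clearing problem collapses to the \emph{SOCIAL PLANNER} problem at equilibrium. First I would verify the truthful generator bid. Repeating the argument used for \eqref{generator_profit_problem_parameter}, a price-taking generator solving $\max_{\alpha_j\ge 0}\ \alpha_j\Theta_j^T\Theta_j - \alpha_j^2 \tfrac{c_j}{2}\Theta_j^T\Theta_j$ obtains $\alpha_j = 1/c_j$, so \eqref{generator_linear_supply} amounts to the truthful marginal-cost offer $g_j = \Theta_j/c_j$. Next, \eqref{cycle_aware_opt_beta} has already established that a price-taking storage owner chooses $\beta_i=1/b_i$, so \eqref{slot_agnostic_supply_function} reduces to $\nu_i = \theta_i/b_i$. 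Crucially, this truthful bid is independent of the clearing prices $\theta_i$ thanks to Lemma~\ref{N_indep_beta} and the fact that $N(u_i)u_i$ is $1$-homogeneous in $u_i$.

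Substituting $\alpha_j = 1/c_j$ and $\beta_i = 1/b_i$ into the \emph{CYCLE AWARE SYSTEM} \eqref{mixed_market_dispatch} turns its objective into
\begin{equation}
\sum_{i\in\mathcal S}\frac{b_i}{2}\nu_i^T\nu_i + \sum_{j\in\mathcal G}\frac{c_j}{2}g_j^Tg_j. \nonumber
\end{equation}
The Rainflow constraint \eqref{rainflow_const} then allows me to eliminate $\nu_i$ via $\nu_i = N(u_i)u_i$, and by Proposition~1 the storage term equals the true degradation cost $C_i(u_i)$. Together with the remaining constraints \eqref{gen_limit_ineq}, \eqref{SoC_control_limit}, \eqref{periodicity_const}, \eqref{Soc_limit_rate_ineq}, \eqref{power_bal_const}, this is verbatim the \emph{SOCIAL PLANNER} problem \eqref{planner_problem} under the assumption $a_j=0$.

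To close the argument I would invoke Proposition~\ref{optimality_lemma}: since the cycle-aware dispatch problem is equivalent to a convex program, any KKT-point of \eqref{mixed_market_dispatch} is globally optimal, and hence a globally optimal primal-dual pair $(g_j^\ast,u_i^\ast,\nu_i^\ast,\lambda^\ast,\theta_i^\ast)$ exists. Pairing this with the truthful bids $(\alpha_j,\beta_i)$ derived above satisfies all four conditions of the competitive-equilibrium definition: generator and storage profit maximization (since the bids are the best responses regardless of prices), the Rainflow constraint at prices $\theta_i^\ast$, and the power balance at $\lambda^\ast$. Because under these bids the market-clearing optimum coincides exactly with a minimizer of the social planner's cost over the same feasible set, $(g_j^\ast,u_i^\ast)$ is optimal for \eqref{planner_problem}.

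The main obstacle I would have to handle carefully is the non-differentiability of $N(\cdot)$ highlighted in Remark~\ref{multiple_Nmatrix}. At such points the first-order condition \eqref{cycle_aware_opt_beta} must be read as a subdifferential condition, and the proof of truthful bidding needs the homogeneity property $N(\beta_i\theta_i)=N(\theta_i)$ from Lemma~\ref{N_indep_beta} so that the storage profit remains a smooth quadratic in $\beta_i$ despite $N(\cdot)$ being piecewise linear in its argument. Once this technicality is dispatched, the equivalence with the social planner follows directly from the convex-programming reformulation already established in Proposition~\ref{optimality_lemma}.
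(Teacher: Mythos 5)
Your proposal is correct and follows essentially the same route as the paper's proof: derive the truthful bids $\alpha_j = 1/c_j$ and $\beta_i = 1/b_i$ under price taking, observe that substituting them into the cycle-aware clearing problem recovers the \emph{SOCIAL PLANNER} problem, and invoke Proposition~\ref{optimality_lemma} to guarantee global optimality despite the piecewise-linear Rainflow constraint. The only quibble is that Lemma~\ref{N_indep_beta} is not actually needed here---the storage profit \eqref{str_profit_slot_agnostic_bid} is already a smooth quadratic in $\beta_i$ because the bid is placed directly on cycle depths and $N(\cdot)$ drops out via \eqref{depth_relation}---but this does not affect the validity of your argument.
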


\begin{proof}
    Under price-taking assumptions, given prices $(\Theta_j,j\in\mathcal{G})$ from the market-clearing~\eqref{mixed_market_dispatch}, the optimal bid is given by 
    \begin{align}
        & \frac{\partial \pi_{g_j}}{\partial {\alpha_j}} = \frac{\partial}{\partial {\alpha_j}}\left({\Theta_j}^Tg_j - \frac{c_j}{2}g_j^Tg_j\right) = 0  \implies {\alpha_j}^{*} = \frac{1}{c_j}. \nonumber %\label{opt_alpha_2}
    \end{align}
     and for $(\theta_i,i\in\mathcal{S})$ the optimal decision parameter from~\eqref{cycle_aware_opt_beta} is given by 
    \[
        \beta_i^{*} = \frac{1}{b_i},\forall i\in\mathcal{S}.
    \]
    %Also from the KKT condition~\eqref{cycle_system_kkt_3} we have
    %\[
    %    \lambda^{{*}^T}u_i^{*} =\sum_{k=1}^m\hat{\gamma_k}\theta_i^{{*}^T}N_k(u_i^{*})u_i^{*}= \theta_i^{{*}^T}\nu_i^{*}, \ \forall i\in\mathcal{S}
    %\]
    %i.e. the market revenue is aligned under cycle based prices $\theta_i^{*}$.
    Therefore using the optimal solution of~\eqref{mixed_market_dispatch} from Proposition~\ref{optimality_lemma} along with the the optimal bid of participants given by $(\beta_i^{*}, i\in\mathcal{S}, \alpha_j^{*},j\in\mathcal{G})$, we recover the social planner problem~\eqref{planner_problem}. Hence the competitive equilibrium of~\eqref{mixed_market_dispatch} also solves the \emph{SOCIAL PLANNER} problem. 
\end{proof}

We end by noting that, although the cycle aware system in~\eqref{mixed_market_dispatch} may have non-unique optimal schedule $u_i^{*}$ and $\theta_i^{*}$ due to the piece-wise linear rainflow constraint~\eqref{rainflow_const}, the cycle aware market mechanism aligns with the social planner problem, and any such solution will be optimal. \textcolor{black}{We further note that the additional assumption of uniform pricing leads to the set of unique optimal schedule and prices. This case is not discussed further here, due to page limits.}

\section{Numerical Simulation} \label{sec_5}

In this section we provide a numerical example comparing the competitive equilibrium of the Prosumer based market mechanism \emph{(PBM)} and the Cycle based market mechanism \emph{(CBM)}. We use aggregate demand data of the Millwood Zone operated by the NYISO (date: 8/10/2020)~\cite{nyisodata}. For ease of analysis we assume one generator and one aggregate storage unit. The aggregate cost coefficients of the generator are $c = 0.1 \$/(MW)^2$ and $a = 20\$/MW$ in equation \eqref{planner_obj}~\cite{matpower} and the empirical cost coefficients of the quadratic cycle stress function is $\rho = 5.24 \times 10^{-4}$~\cite{shi2017optimal}. The generation has sufficient capacity to meet the demand, i.e. $\underline{g} = 0$ and $\overline{g} \ge \max_t\{d_t\}$. The storage rate limits are given by 
$\overline{u} = \frac{E}{4}$ and $\underline{u} = -\frac{E}{4}$, which corresponds to storage requiring four hours (slots) to completely charge or discharge. 

We use a canonical Generation Centric Dispatch (\emph{GCD}) model in which market accounts for only generation cost i.e. disregarding storage degradation cost from the objective function leading to a cycle unaware dispatch strategy as a benchmark case.
% and the storage degradation is a hidden cost. 
This \emph{hidden} cycling cost is calculated from the storage SoC profile of the optimal solution. These costs are then added to the cost function value to compute the total social cost, i.e. social cost = generation cost + (\emph{hidden}) cycling cost.

\begin{figure}[ht]
    \centering
    \subfloat{{\includegraphics[width=4.4cm]{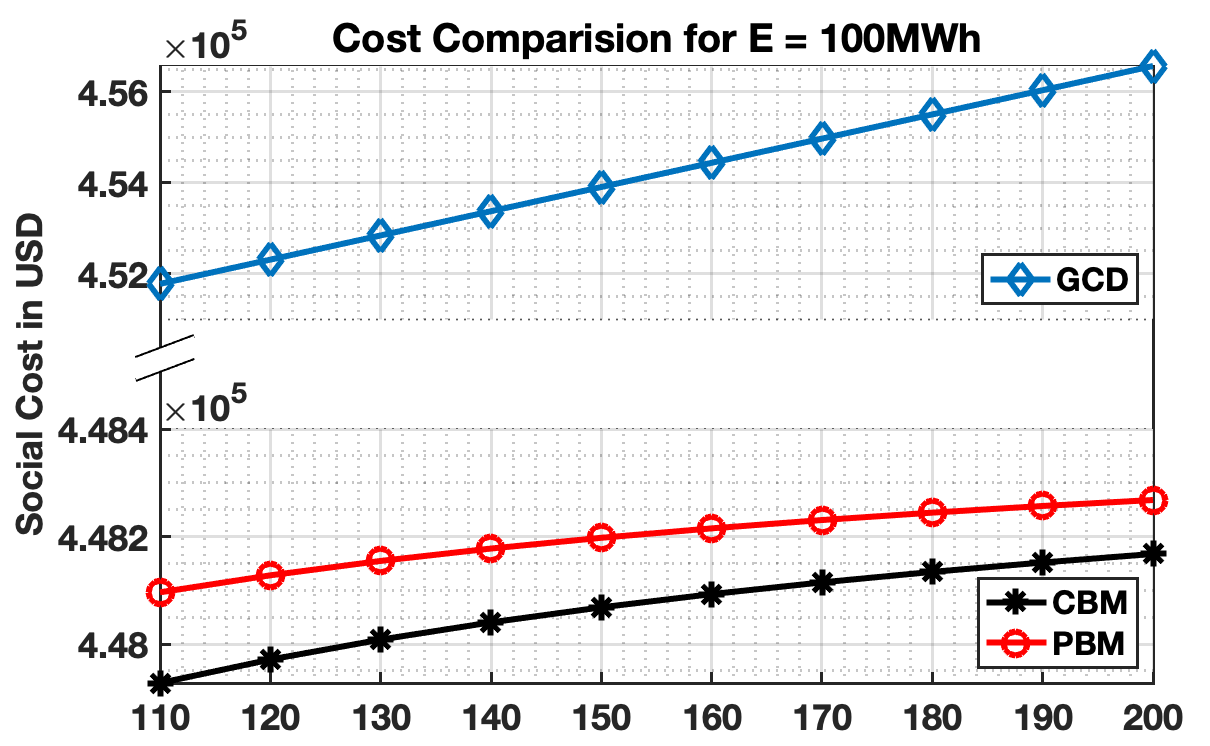} }}%%
    \subfloat{{\includegraphics[width=4.4cm]{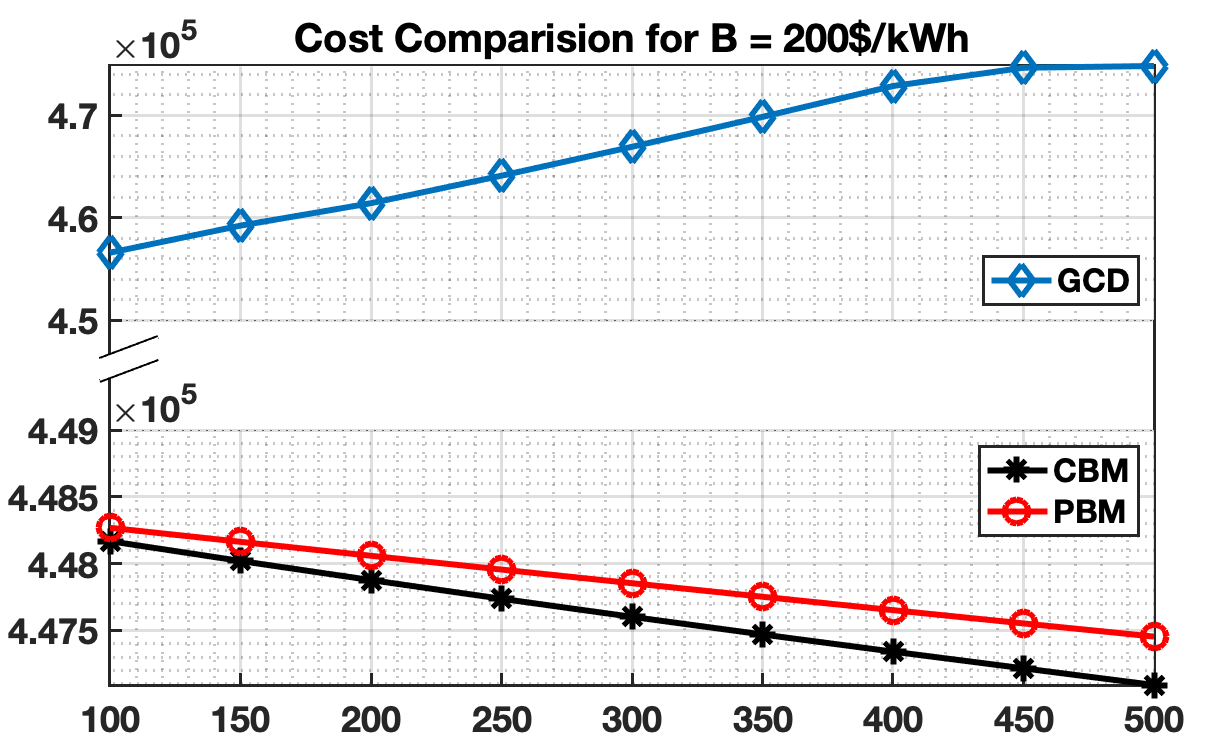} }}%%
    %\caption{Social cost in the CBM, PBM and GCD, and Unaccounted storage cycling cost of GCD w.r.t storage capital cost and capacity respectively}%
    \newline
    \centering
    \setcounter{subfigure}{0}
    \subfloat[\label{fig:cost_B}\centering Cost w.r.t storage capital cost]{{\includegraphics[width=4.4cm]{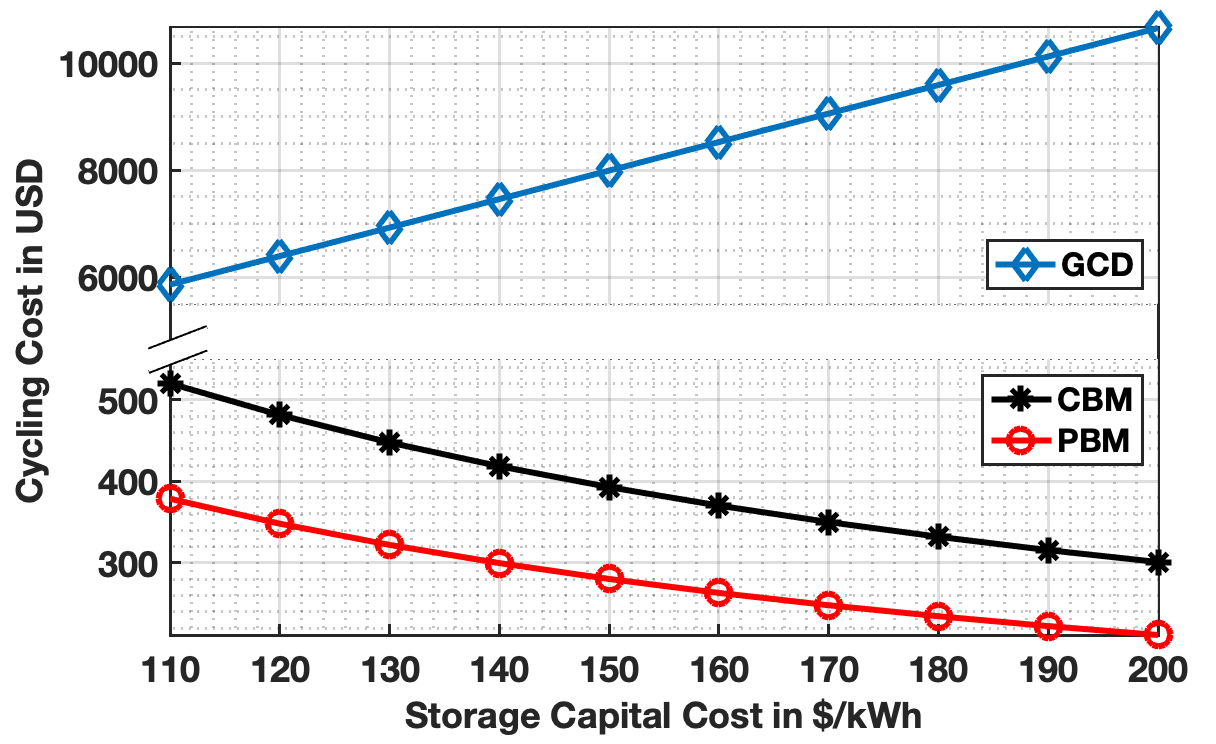} }}%%
    \subfloat[\label{fig:cost_E}\centering Cost w.r.t storage capacity]{{\includegraphics[width=4.35cm]{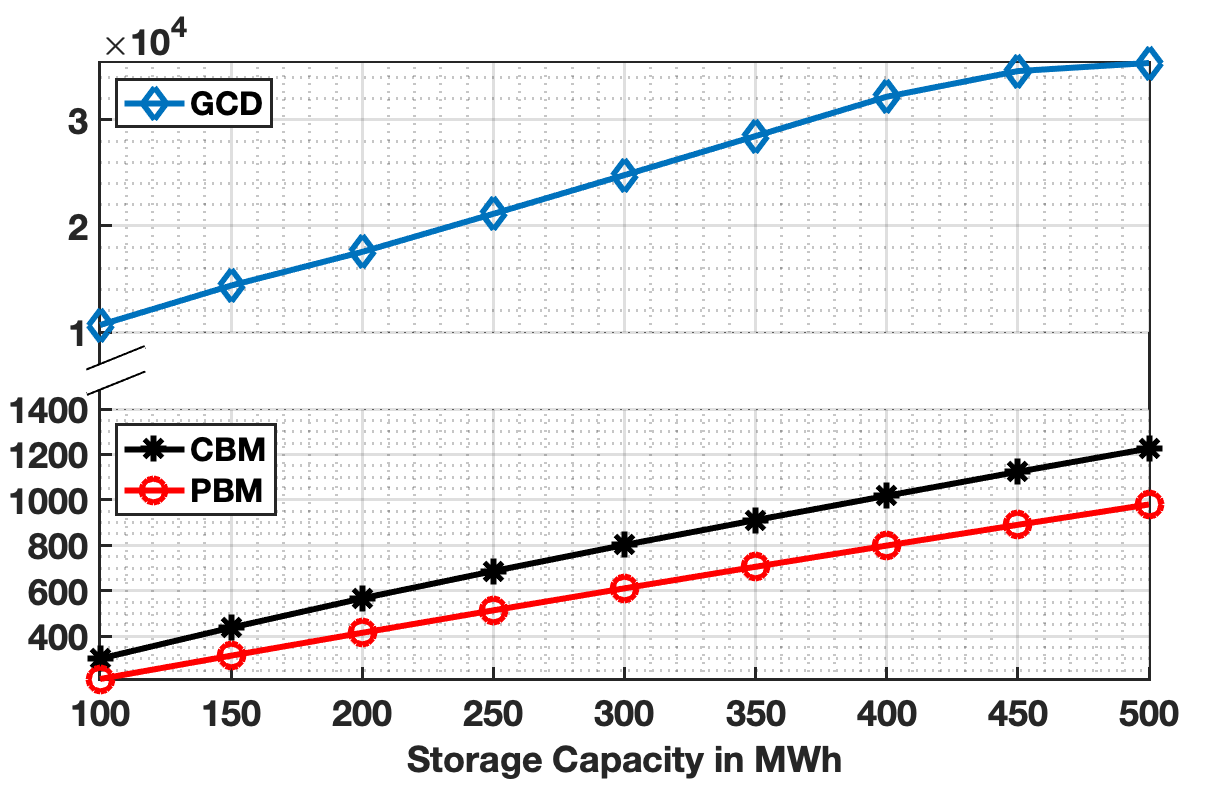} }}%%
    \caption{(Top) Social cost and (Bottom) Cycling cost in the Cycle based mechanism (CBM), Prosumer based mechanism (PBM) and Generation centric dispatch (GCD) w.r.t storage capital cost and storage capacity.}%
    \label{fig:cost}%
\end{figure}

Fig.~\ref{fig:cost} illustrates the social cost and cycling cost of storage as we (a) increase the storage capital cost given a fixed storage capacity and (b) increase the storage capacity given a fixed storage capital cost. %for each day. 
In the top panel of  Fig.~\ref{fig:cost_B} we fix the storage capacity to be $E = 100 MWh$. As expected the social cost increases with the capital cost and our proposed CBM gives the lowest social cost, while GCD has the highest costs as it does not account for cycling costs in the optimization. The bottom panel in Fig.~\ref{fig:cost_B} shows the cycling cost of storage under the three mechanisms. As expected GCD utilizes the storage without any restrictions leading to higher cost. Since the PBM overestimates the cost of storage, it leads to more restrictive use of storage and hence lower cycling cost compared to CBM. Although CBM incurs higher storage cost, the incentive compatibility allows it to reduce the total social cost. 
%Further as the storage capital cost decreases, storage becomes cheaper, creating a flatter generation profile at the peak period. Therefore the cost of generation decreases rapidly compared to the decrease in the cost of generation in the power based market mechanism. 

In Fig.~\ref{fig:cost_E} we fix the storage capital cost to be $B =200 \$/kWh$~\cite{mongird2019energy} and increase storage capacity.
The social cost decreases with the storage capacity for CBM and PBM. Thus the benefits of accounting for degradation increase when storage capacity increases (top panel of Fig.~\ref{fig:cost_E}). Further, not accounting for storage cost, as in GCD, leads to overall higher social cost.
This is because as the capacity increases, storage can supply the required power with fewer or (relatively) shallower cycles, thus decreasing the social cost of CBM and PBM. The cycling cost is shown in the bottom panel in Fig.~\ref{fig:cost_E}. 

\begin{figure}[ht]
    \centering
    \subfloat[\label{fig:str_profit_B}\centering Profit w.r.t storage capital cost]{{\includegraphics[width=4.4cm]{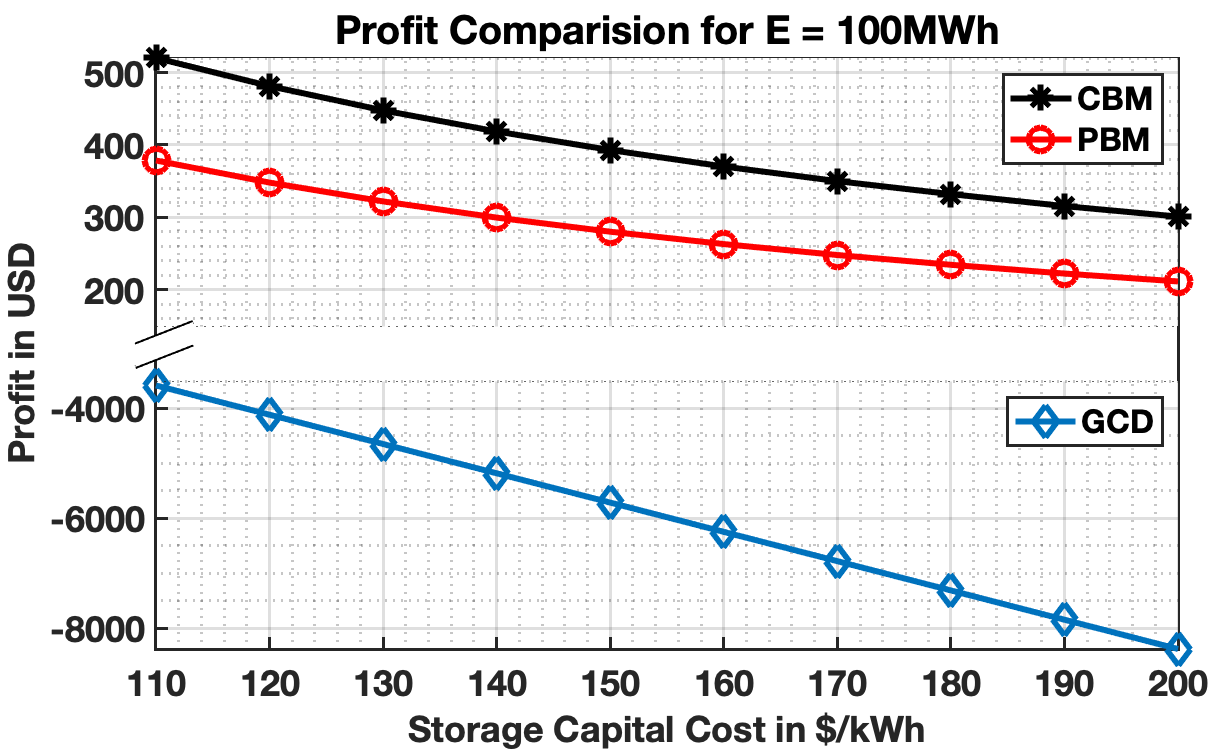} }}%%
    \subfloat[\label{fig:str_profit_E}\centering Profit w.r.t storage capacity]{{\includegraphics[width=4.5cm]{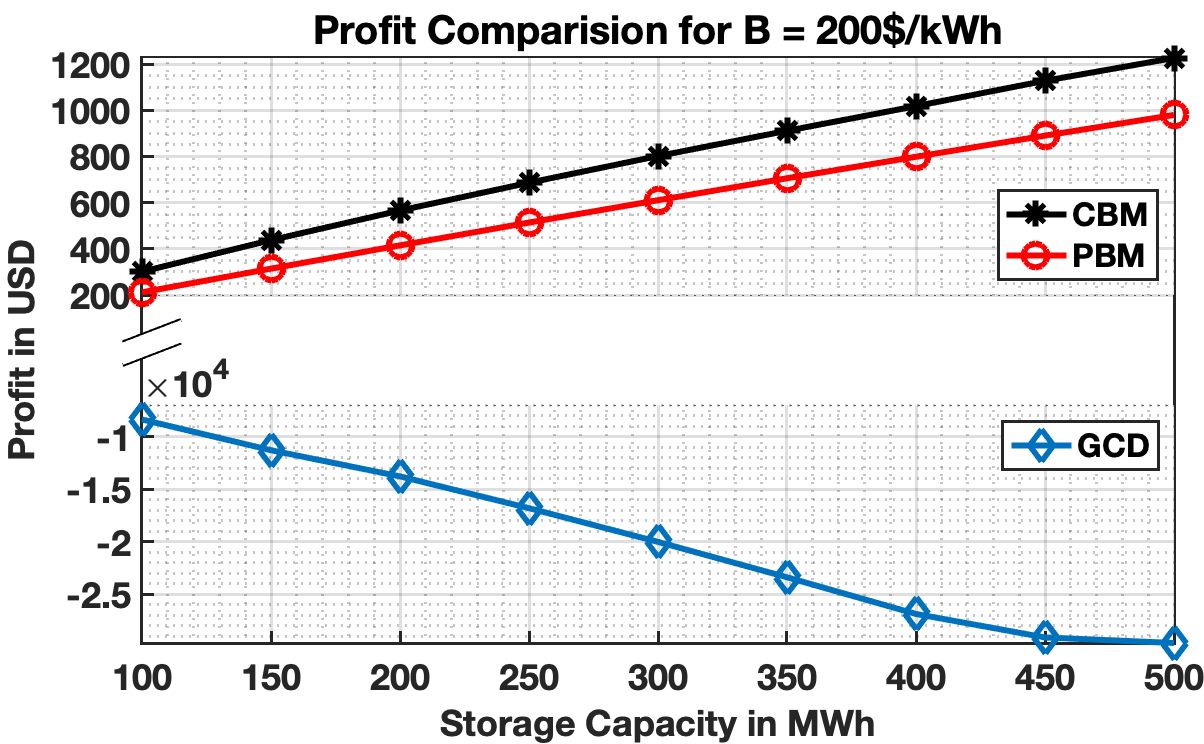} }}%%
    \caption{Storage profit in the Cycle based mechanism (CBM), Prosumer based mechanism (PBM) and Generation centric dispatch (GCD) w.r.t storage capital cost and storage capacity.}%
    \label{fig:profit_diff}%
\end{figure}

Fig.~\ref{fig:profit_diff} compares the profit of the storage as we increase the storage capital cost and increase the storage capacity respectively. In Fig.~\ref{fig:str_profit_B} we fix the storage capacity to be $E =100 MWh$ whereas in Fig.~\ref{fig:str_profit_E} we fix storage capital cost to be $B = 200\$/kWh$. As expected storage earns more profit in CBM as compared to PBM due to realistic representation of cost of storage degradation. The GCD leads to losses for the storage due to the large unaccounted for cycling cost of storage. As storage capital cost decreases or the storage capacity increases, storage incurs lower degradation cost while earning more profit at the peak period.

\iffalse
\begin{figure}[ht]
    %\vspace*{-.3cm}
    \centering
    \includegraphics[width=0.45\textwidth]{}
    \caption{(Top) Demand and generation schedule of CBM, PBM and GCD; (Middle) Storage rate profile of CBM, PBM and GCD; (Bottom) market-clearing price of CBM, PBM and GCD}
    \label{fig:dispatch}
    %\vspace*{-.4cm}
\end{figure}

We now show the optimal solution of the two mechanism in Fig.~\ref{fig:dispatch}. Here we fix storage capacity to be $E = 500 MWh$, and storage capital cost to $B = 200\$/kWh$. As expected, the CBM gives shallower depths due to degradation cost whereas GCD utilize it more. In PBM storage acts like another generator with higher cost due to high capital cost and therefore participates less in the market. The market-clearing price is also shown in the bottom panel.

\fi

%These simulations demonstrate the efficiency of the proposed cycle based market mechanism. 
% in reflecting the incentive of storage in the market.

\section{Conclusions} \label{sec_6}

In this paper, we study the incentive of generators and storage units via market equilibrium analysis. We first analyze a prosumer based market mechanism where both generators and storage bid linear supply functions. Under the price-taking assumptions, the competitive equilibrium in such a market requires restrictive conditions to align with the social optimum. Furthermore, the optimal bid of storage is a temporally coupled function of market prices which is not desirable.
To address these shortcomings, we propose a novel energy-cycling function for storage where storage bids cycle depths as a function of per-cycle prices. This type of storage bidding function along with a linear supply function for generators incentivizes the participants to reflect their true cost in the market i.e. the competitive equilibrium in such a market minimizes the social cost. Numerical examples illustrate the importance of conveying storage owner incentives to market operators to achieve social welfare.

\bibliographystyle{IEEEtran}
\bibliography{PSCC2022}

\end{document}